\documentclass[12pt,a4paper]{amsart}
\usepackage[english]{babel}
\usepackage{amsmath,amssymb,amsthm,latexsym,MnSymbol}
\usepackage[latin1]{inputenc}
\usepackage{url}
\usepackage[pdftex]{graphicx}
\usepackage{tikz}
\usepackage[pdftex]{hyperref}
\usepackage[all]{xy}

\newcommand{\ZZ}{\mathbf{Z}}
\newcommand{\Zhat}{\widehat{\ZZ}}
\newcommand{\QQ}{\mathbf{Q}}
\newcommand{\Qb}{\overline{\QQ}}

\newcommand{\RR}{\mathbf{R}}
\newcommand{\CC}{\mathbf{C}}

\newcommand{\A}{\mathbf{A}}

\newcommand{\TT}{\mathbf{T}}
\newcommand{\T}{\mathcal{T}}
\newcommand{\TG}{\T G}

\newcommand{\Dvec}{\vec{D_E}}

\DeclareMathOperator{\dv}{div}
\DeclareMathOperator{\darg}{darg}

\DeclareMathOperator{\End}{End}
\DeclareMathOperator{\Gal}{Gal}
\DeclareMathOperator{\GL}{GL}
\DeclareMathOperator{\Hom}{Hom}
\DeclareMathOperator{\id}{id}
\DeclareMathOperator{\Ind}{Ind}

\DeclareMathOperator{\pr}{pr}

\DeclareMathOperator{\reg}{reg}
\DeclareMathOperator{\Res}{Res}

\newtheorem{thm}{Theorem}

\newtheorem{lem}[thm]{Lemma}
\newtheorem{pro}[thm]{Proposition}

\newtheorem*{cor*}{Corollary}
\theoremstyle{definition}
\newtheorem{definition}[thm]{Definition}

\theoremstyle{remark}

\newtheorem*{remarks*}{Remarks}

\title{On Zagier's conjecture for base extensions of elliptic curves}

\author{François Brunault}

\date{\today}

\address{ÉNS Lyon, UMPA, 46 allée d'Italie, 69007 Lyon, France}

\email{francois.brunault@ens-lyon.fr}

\subjclass[2000]{11G40, 11G55, 19F27}

\begin{document}

\begin{abstract}
Let $E$ be an elliptic curve over $\QQ$, and let $F$ be a finite abelian extension of $\QQ$. Using Beilinson's theorem on a suitable modular curve, we prove a weak version of Zagier's conjecture for $L(E/F,2)$, where $E/F$ is the base extension of $E$ to $F$.
\end{abstract}

\maketitle

\section*{Introduction}

Zagier conjectured in \cite{zagier:zc} very deep relations between special values of zeta functions at integers, special values of polylogarithms at algebraic arguments and $K$-theory. While the original conjectures concerned the Dedekind zeta function of a number field (as well as Artin $L$-functions), theoretical and numerical results by many authors suggested an extension of the conjectures to elliptic curves. A precise formulation for elliptic curves over number fields was given by Wildeshaus in \cite{wildeshaus:ezc}. The conjecture on $L(E,2)$, where $E$ is an elliptic curve over $\QQ$, was proved by Goncharov and Levin in \cite{goncharov-levin}. In this article, we prove an analogue of Goncharov and Levin's result for the base extension of $E$ to an arbitrary abelian number field.

Let $E$ be an elliptic curve defined over $\QQ$. Let $F \subset \Qb$ be a finite abelian extension of $\QQ$, of degree $d \geq 1$. Let $G=\Gal(F/\QQ)$ be the Galois group of $F$, and let $\widehat{G}$ be its group of $\Qb^\times$-valued characters. The $L$-function $L(E/F,s)$ of the base change of $E$ to $F$ admits a factorization $\prod_{\chi \in \widehat{G}} L(E \otimes \chi,s)$, where each factor has an analytic continuation to $\CC$ with a simple zero at $s=0$. The functional equation relates $L(E/F,2)$ with the leading term of $L(E/F,s)$ at $s=0$.

Fix an isomorphism $E(\CC) \cong \CC/(\ZZ+\tau \ZZ)$ ($\tau \in \CC$, $\Im(\tau)>0$) which is compatible with complex conjugation. Let $D_E$ (resp. $J_E$) be the Bloch elliptic dilogarithm (resp. its ``imaginary'' cousin) on $E(\CC)$ (see \S \ref{section reg map}-\ref{section regEF} for the definitions). Fix an embedding $\iota : \Qb \hookrightarrow \CC$, so that $E(\Qb)$ embeds naturally in $E(\CC)$. Note that $D_E$ and $J_E$ induce linear maps on $\ZZ[E(\Qb)]$. Let $\ZZ[E(\Qb)]^{G_F}$ be the group of divisors on $E(\Qb)$ which are invariant under $G_F:=\Gal(\Qb/F)$. It carries a natural action of $G$. The main theorem of this article can be stated as follows.

\begin{thm}\label{LEchi0 thm}
There exists a divisor $\ell \in \ZZ[E(\Qb)]^{G_F}$ such that for every $\chi \in \widehat{G}$, we have
\begin{equation}\label{LEchi0 formula}
L'(E \otimes \chi,0) \sim_{\QQ^\times} \begin{cases} \frac{1}{\pi} \sum_{\sigma \in G} \chi(\sigma) D_E(\ell^\sigma) & \textrm{if } \chi \textrm{ is even},\\
\frac{1}{\pi \Im(\tau)} \sum_{\sigma \in G} \chi(\sigma) J_E(\ell^\sigma) & \textrm{if } \chi \textrm{ is odd.}
\end{cases}
\end{equation}
\end{thm}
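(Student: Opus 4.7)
The plan is to realize both sides of \eqref{LEchi0 formula} as values of Beilinson regulators and to connect them through a modular parametrization. Fix an integer $N$ divisible by the conductor of $E$ and by the conductors of all characters in $\widehat{G}$, so that $F \subset \QQ(\zeta_N)$, and let $\pi : X_1(N) \to E$ be a modular parametrization defined over $\QQ$ (which exists by modularity of $E$). The strategy is to push a Beilinson element on $X_1(N)$ forward along $\pi$, interpret the resulting regulator on $E$ as an elliptic dilogarithm evaluated on the image of a cusp divisor, and finally decompose by characters of $G$.

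First I would invoke Beilinson's theorem on $X_1(N)$: the regulator of a symbol $\{u,v\} \in K_2(X_1(N)) \otimes \QQ$ built from Siegel units, paired against the newform $f_E$ twisted by a Dirichlet character $\chi$ of conductor dividing $N$, computes $L'(f_E \otimes \chi, 0) = L'(E \otimes \chi, 0)$ up to $\QQ^\times$. This recovers all the values appearing on the left-hand side of \eqref{LEchi0 formula} simultaneously from a single Siegel-unit element $\xi \in K_2(X_1(N)) \otimes \QQ$. Pushing $\xi$ forward along $\pi$, the projection formula and functoriality of the regulator recast this as a regulator pairing on $E$ of the class $\pi_* \xi$. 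The key input, recalled in Sections \ref{section reg map}--\ref{section regEF}, is that the regulator on $K_2$ of $E$, evaluated on symbols $\{f,g\}$ whose divisors are supported on torsion, is computed by the Bloch elliptic dilogarithm $D_E$ applied to the Pontryagin product $\dv(f) * \dv(g)$; an analogous ``imaginary'' regulator gives $J_E$ on the same divisor. Since Siegel units have divisors supported on cusps of $X_1(N)$, and $\pi$ sends cusps to torsion points of $E$, the output of this computation is a torsion divisor $\ell$ on $E$.

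That $\ell$ lies in $\ZZ[E(\Qb)]^{G_F}$ would follow from the standard description of the Galois action on cusps of $X_1(N)$ via diamond operators, combined with $F \subset \QQ(\zeta_N)$: after taking an appropriate trace from $\QQ(\zeta_N)$ to $F$, the cusp divisor giving rise to $\ell$ is $G_F$-invariant, and $G=\Gal(F/\QQ)$ acts on it via the quotient $\Gal(\QQ(\zeta_N)/\QQ) \twoheadrightarrow G$. The character decomposition in \eqref{LEchi0 formula} then becomes formal: Beilinson's formula for $L'(f_E \otimes \chi, 0)$ is already organized by $\chi$, and projecting $\ell$ onto the $\chi$-isotypic idempotent of $\QQ[G]$ produces exactly the sum $\sum_{\sigma \in G} \chi(\sigma) \ell^\sigma$. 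The even/odd dichotomy corresponds precisely to the real versus imaginary component of the regulator of $\pi_* \xi$ on $E(\CC)$, which explains why $D_E$ appears for $\chi$ even and $J_E/\Im(\tau)$ for $\chi$ odd.

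The hardest part will be the compatibility step: verifying that pushing Beilinson's explicit Siegel-unit symbol forward along $\pi$ really produces a regulator of the asserted form on $E$, up to $\QQ^\times$. This amounts to matching the Eisenstein symbol on $X_1(N)$, whose regulator is expressed through Kronecker--Eisenstein series, with the elliptic dilogarithm on $E$ as computed by Bloch and Goncharov--Levin, while tracking normalization factors such as $\pi$, $\Im(\tau)$, and the periods of $E$, and confirming that projection onto the $\chi$-isotypic component commutes with the pushforward $\pi_*$ and with the regulator. Once this bookkeeping is in place, the theorem follows from applying Beilinson's $L$-value formula character by character to the single divisor $\ell$ so constructed.
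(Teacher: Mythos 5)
Your overall flavor --- Beilinson's theorem on a modular curve, pushforward along a modular parametrization, the elliptic dilogarithm computing the regulator on $E$ --- matches the paper, but there are two genuine gaps. The first is structural: you work with $X_1(N)$ over $\QQ$ and push forward to $E$ over $\QQ$, but $H^1(E(\CC),\RR)^-$ is one-dimensional, so a single class $\pi_*\xi \in K_2(E)\otimes\QQ$ can only ever see one period, not the $d$ independent values $L'(E\otimes\chi,0)$. Moreover $f\otimes\chi$ is a newform of level $N_{f\otimes\chi}$ (roughly $Nm^2$), so it does not appear in $S_2(\Gamma_1(N))$ and Beilinson's theorem on $X_1(N)$ says nothing about $L'(f\otimes\chi,0)$. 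The paper's resolution is to work on the non-geometrically-connected curve $X_1(N)_F \cong \overline{M}_{K_1(N)_F}$ in the adelic setting, where $H^1(E_F(\CC),\RR)^-$ is free of rank one over $\RR[G]$ and decomposes into $\chi$-eigenlines; the crucial technical input is the divisibility $e_{f\otimes\chi} \mid e_\chi e_F$ in the Hecke algebra $\TG$ (Prop.~\ref{divisibility}), which identifies the $\chi$-isotypic part of the $E_F$-component with a sub-eigenspace of the $\pi(f\otimes\chi)$-eigenspace, so that the Schappacher--Scholl form of Beilinson's theorem applies eigenspace by eigenspace. Your proposal has no substitute for this step, and without it the ``character decomposition is formal'' claim does not go through: projecting by $e_\chi$ on the $F$-rational divisor $\ell$ is not by itself linked to the twisted $L$-value.

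The second gap is the uniformity in $\chi$. Even granting a relation for each $\chi$ separately, what one obtains a priori is $\mu_\chi(\ell)/\lambda_\chi \in \QQ(\chi)^\times$, not $\QQ^\times$, and the theorem demands a \emph{single} divisor $\ell$ making all $d$ relations hold up to $\QQ^\times$ simultaneously. The paper handles this by observing that the ratios are Galois-equivariant ($\tau(a_\chi)=a_{\chi^\tau}$) and invoking Lemma~\ref{lemme alg} to produce $a\in\QQ[G]^\times$ with $\chi(a)=a_\chi$, then replacing $\ell$ by a multiple of $a\ell$; this in turn relies on $R_{E/F}$ being free of rank one over $\QQ[G]$ so that a single generator $\gamma$ (hence a single $\ell=\beta(\widetilde\gamma)$) controls every $\chi$-component. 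Your proposal skips this rescaling entirely. Finally, a smaller point: your claim that $\ell$ is a torsion divisor (image of cusps under $\pi$) is stronger than what is proved; the paper explicitly notes its methods are inexplicit and only obtains a cuspidal $\ell$ in the worked example $E=X_1(11)$, $F=\QQ(\zeta_{11})^+$.
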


Using the Dedekind-Frobenius formula for group determinants, we deduce from Theorem \ref{LEchi0 thm} the following result. Write $G=\{\sigma_1,\ldots,\sigma_d\}$ if $F$ is real, and $G=\{\sigma_1,\overline{\sigma_1},\ldots,\sigma_{d/2},\overline{\sigma_{d/2}}\}$ if $F$ is complex.

\begin{cor*}[Weak version of Zagier's conjecture for $L(E/F,2)$]\label{LEF2 thm}
\quad

Let $\ell \in \ZZ[E(\Qb)]^{G_F}$ be a divisor satisfying the identities (\ref{LEchi0 formula}) of Theorem 1. Put $\ell_i = \ell^{\sigma_i^{-1}}$. If $F$ is real, we have
\begin{equation}\label{LEF2 formula real}
L(E/F,2) \sim_{\QQ^\times} \pi^d \cdot \det \bigl(D_E(\ell_i^{\sigma_j})\bigr)_{1 \leq i,j \leq d}.
\end{equation}

If $F$ is complex, we have
\begin{equation}\label{LEF2 formula complex}
L(E/F,2) \sim_{\QQ^\times} \frac{\pi^d}{\Im(\tau)^{d/2}} \cdot \det \bigl(D_E(\ell_i^{\sigma_j})\bigr)_{1 \leq i,j \leq d/2} \cdot \det \bigl(J_E(\ell_i^{\sigma_j})\bigr)_{1 \leq i,j \leq d/2}.
\end{equation}
\end{cor*}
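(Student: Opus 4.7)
The plan is to combine the Euler factorization $L(E/F,s) = \prod_{\chi \in \widehat{G}} L(E \otimes \chi, s)$ with the functional equation of $L(E/F,s)$ and Theorem~\ref{LEchi0 thm}, and then interpret the resulting product of character sums as a group determinant via the Dedekind--Frobenius identity
\[ \det\bigl(a_{\sigma_i^{-1}\sigma_j}\bigr)_{1 \leq i,j \leq d} = \prod_{\chi \in \widehat{G}} \sum_{\sigma \in G} \chi(\sigma)\, a_\sigma. \]
Since each $L(E\otimes\chi,s)$ has a simple zero at $s=0$, the leading coefficient of $L(E/F,s)$ at $s=0$ equals $\prod_\chi L'(E\otimes\chi,0)$; combined with the functional equation for the self-dual motive $E/F$ of weight $2$ and dimension $2d$ (which produces only rational factors and powers of $\pi$), this yields $L(E/F, 2) \sim_{\QQ^\times} \pi^{2d} \prod_{\chi} L'(E\otimes\chi, 0)$. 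Substituting Theorem~\ref{LEchi0 thm} and splitting the product according to parity, I get $L(E/F, 2) \sim_{\QQ^\times} \pi^{d}\, \Im(\tau)^{-n_{\textrm{odd}}}\, P_D \cdot P_J$, where $n_{\textrm{odd}}$ is the number of odd characters, $P_D = \prod_{\chi \textrm{ even}} \sum_\sigma \chi(\sigma) D_E(\ell^\sigma)$, and $P_J = \prod_{\chi \textrm{ odd}} \sum_\sigma \chi(\sigma) J_E(\ell^\sigma)$.

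In the real case all characters are even, $n_{\textrm{odd}} = 0$, and Dedekind--Frobenius applied to $a_\sigma = D_E(\ell^\sigma)$ identifies $P_D$ with $\det\bigl(D_E(\ell_i^{\sigma_j})\bigr)_{1 \leq i,j \leq d}$, giving~(\ref{LEF2 formula real}). For complex $F$, the trick is to apply Dedekind--Frobenius only once, to the combined function $a_\sigma = D_E(\ell^\sigma) + \Im(\tau)^{-1} J_E(\ell^\sigma)$. Because $D_E$ is invariant and $J_E$ anti-invariant under complex conjugation $c \in G$, the character-side sum $\sum_\sigma \chi(\sigma) a_\sigma$ reduces to $\sum_\sigma \chi(\sigma) D_E(\ell^\sigma)$ for even $\chi$ and to $\Im(\tau)^{-1}\sum_\sigma \chi(\sigma) J_E(\ell^\sigma)$ for odd $\chi$, so the full product equals $\Im(\tau)^{-d/2} P_D P_J$. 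On the matrix side, ordering $G$ as $\{\sigma_1, \dots, \sigma_{d/2}, c\sigma_1, \dots, c\sigma_{d/2}\}$ puts the group matrix into the symmetric block form $\bigl(\begin{smallmatrix} A & B \\ B & A \end{smallmatrix}\bigr)$ with $A+B = 2\bigl(D_E(\ell_i^{\sigma_j})\bigr)$ and $A-B = 2\,\Im(\tau)^{-1}\bigl(J_E(\ell_i^{\sigma_j})\bigr)$; the classical identity $\det\bigl(\begin{smallmatrix} A & B \\ B & A \end{smallmatrix}\bigr) = \det(A+B)\det(A-B)$ then produces the two $(d/2) \times (d/2)$ determinants appearing in~(\ref{LEF2 formula complex}).

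The main obstacle --- really the only nonroutine step --- is the complex case: one must spot the right auxiliary function $a_\sigma$ so that a single application of Dedekind--Frobenius generates both the $D_E$- and the $J_E$-determinant simultaneously. Once this choice is in hand, the block-matrix identity is classical, and everything else reduces to bookkeeping of rational factors, $\Gamma$-values, and powers of $\pi$ in the functional equation.
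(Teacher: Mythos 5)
Your argument is correct and follows essentially the same route as the paper: reduce to $L(E/F,2)\sim_{\QQ^\times}\pi^{2d}\prod_\chi L'(E\otimes\chi,0)$ via the functional equation, substitute Theorem \ref{LEchi0 thm}, and apply Dedekind--Frobenius, treating the complex case with a single auxiliary function combining $D_E$ and $J_E$ and a block-determinant identity. The only cosmetic differences are that the paper takes $a(\sigma)=D_E(\ell^\sigma)+J_E(\ell^\sigma)$ (tracking the power of $\Im(\tau)$ separately) and orders $G$ as $\sigma_1,\overline{\sigma_1},\ldots$ so as to use $2\times 2$ blocks and elementary operations instead of the $\det\bigl(\begin{smallmatrix} A & B \\ B & A \end{smallmatrix}\bigr)=\det(A+B)\det(A-B)$ identity.
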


\begin{remarks*}
\begin{enumerate}
\item Wildeshaus's formulation of the conjecture \cite[Conjecture, Part 2, p. 366]{wildeshaus:ezc} uses Kronecker doubles series instead of $D_E$ and $J_E$. The link between these objects is classical (see the proof of Prop \ref{regulateur Etau}). We have chosen here to formulate our results in terms of $D_E$ and $J_E$ because these functions are easier to compute numerically and make apparent the distinction according to the parity of $\chi$.

\item Because of the definition of $\ell_i$, the determinant appearing in (\ref{LEF2 formula real}) is a group determinant, indexed by $G$. In fact, the eigenvalues of the matrix $\bigl(D_E(\ell_i^{\sigma_j})\bigr)$ are precisely the sums $\sum_{\sigma \in G} \chi(\sigma) D_E(\ell^\sigma)$ appearing in Theorem \ref{LEchi0 thm}. This reflects the factorization of the $L$-function of $E_F$ as the product of the twisted $L$-functions of $E$.

\item The work of Goncharov and Levin \cite{goncharov-levin} implies that the divisor $\ell$ produced by Theorem \ref{LEchi0 thm} satisfies the conditions \cite[(2)-(4)]{goncharov-levin}. Following \cite{zagier-gangl}, let $\mathcal{A}_{E/F} \subset \ZZ[E(\Qb)]^{G_F}$ be the subgroup of divisors satisfying these conditions. The strong version of Zagier's conjecture predicts that if $F$ is real (resp. complex), then \emph{for any} divisors $\ell_1,\ldots,\ell_d \in \mathcal{A}_{E/F}$ (resp. $\ell_1,\ldots,\ell_{d/2} \in \mathcal{A}_{E/F}$), the right-hand side of (\ref{LEF2 formula real}) (resp. (\ref{LEF2 formula complex})) is a rational multiple of $L(E/F,2)$ (maybe equal to zero). Unfortunately, and as in the case where the base field is $\mathbf{Q}$, this strong conjecture is beyond the reach of current technology.
\end{enumerate}
\end{remarks*}

In order to prove Theorem \ref{LEchi0 thm}, we prove a weak version of Beilinson's conjecture for the special value $L^{(d)}(E/F,0)$ (see \S \ref{section regEF} for the definition of the objects involved in the following theorem).

\begin{thm}\label{regEF thm}
There exists a subspace $\mathcal{P}_{E/F} \subset H^2_{\mathcal{M}/\ZZ}(E_F,\QQ(2))$ such that $R_{E/F} := \reg_{E/F} (\mathcal{P}_{E/F})$ is a $\QQ$-structure of $H^1(E_F(\CC),\RR)^-$ and
\begin{equation}\label{det REF}
\det (R_{E/F}) = L^{(d)}(E/F,0) \cdot \det (H^1(E_F(\CC),\QQ)^-).
\end{equation}
\end{thm}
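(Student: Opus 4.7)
The plan is to exploit the factorization $L(E/F,s) = \prod_{\chi \in \widehat{G}} L(E \otimes \chi, s)$ and its cohomological counterpart, and then to apply Beilinson's theorem on a modular curve which simultaneously sees the modular parametrization of $E$ and every character $\chi \in \widehat{G}$.

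First, I would decompose $H^1(E_F(\CC),\Qb)^-$ into $\chi$-isotypic components under the natural action of $G = \Gal(F/\QQ)$. Because $L(E \otimes \chi, s)$ has a simple zero at $s=0$ (as recalled in the introduction), each isotypic component is one-dimensional over $\Qb$, and the assertion (\ref{det REF}) reduces to producing, for each $\chi \in \widehat{G}$, a single class in $H^2_{\mathcal{M}/\ZZ}(E_F,\QQ(2))$ whose regulator has $\chi$-component equal to a nonzero rational multiple of $L'(E \otimes \chi, 0)$ times a fixed $\QQ$-basis vector of $H^1(E_F(\CC),\QQ)^{-,\chi}$.

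The construction of these classes goes through modular curves. By Kronecker--Weber, $F \subset \QQ(\zeta_M)$ for some integer $M$, and each $\chi \in \widehat{G}$ lifts to a Dirichlet character of conductor dividing $M$. Let $N$ be the conductor of $E$, set $X = X_1(NM)$, and let $\pi \colon X \to E$ be the composition of the degeneracy map $X_1(NM) \to X_1(N)$ with the modular parametrization of $E$. Beilinson's theorem on $X$ produces Steinberg symbols of modular units in $H^2_{\mathcal{M}/\ZZ}(X,\QQ(2))$; their Deligne--Beilinson regulators, paired against the differential attached to the newform $f_E$, compute Rankin--Selberg-type integrals which evaluate to $L(f_E, \chi, 2)$ up to nonzero elements of $\QQ^\times$. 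I would then push these classes forward via $\pi_{\ast}$, base-change to $F$, and assemble them into the required subspace $\mathcal{P}_{E/F} \subset H^2_{\mathcal{M}/\ZZ}(E_F,\QQ(2))$. The functional equation relating $L(E \otimes \chi, s)$ at $s=2$ and $s=0$ then converts the values just obtained into the derivatives $L'(E \otimes \chi, 0)$ appearing in (\ref{det REF}).

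The main obstacle will be the $\QQ$-rationality statement. The Eisenstein-symbol classes above are naturally defined over $\QQ(\zeta_M)$ rather than over $F$, so their regulators a priori live in $H^1(E_F(\CC),\RR)^- \otimes_\QQ \QQ(\zeta_M)$ rather than in a $\QQ$-structure. To descend, I would average over $\Gal(\QQ(\zeta_M)/F)$ before pushing forward, thereby producing a $G$-equivariant family in $H^2_{\mathcal{M}/\ZZ}(E_F,\QQ(2))$. Verifying that the regulator image is a \emph{full} $\QQ$-structure, and that the determinant (\ref{det REF}) comes out to $L^{(d)}(E/F,0) = \prod_\chi L'(E \otimes \chi, 0)$ up to a rational volume factor, then reduces to the non-vanishing of each $L'(E \otimes \chi, 0)$, which is guaranteed by the simple zero of $L(E \otimes \chi, s)$ at $s = 0$.
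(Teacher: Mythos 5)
Your overall strategy --- decompose $H^1(E_F(\CC),\Qb)^-$ into $\chi$-isotypic pieces, construct classes by pushing forward Beilinson elements from a modular curve, and multiply the resulting factors $L'(E\otimes\chi,0)$ --- has the same shape as the paper's proof, which takes $\mathcal{P}_{E/F}$ to be $(\varphi_F)_*\mathcal{P}_{X_1(N)/F}$. But your proposal omits the step the paper identifies as the heart of the matter, and as written the argument does not close. Beilinson's theorem in the form one can actually invoke here (Schappacher--Scholl, on the disconnected modular curve whose field of constants is $F$) controls the regulator eigencomponent by eigencomponent for the \emph{Hecke} projectors $e_{f\otimes\chi}$; what (\ref{det REF}) requires is control of the projector $e_\chi e_F$ cutting out the $\chi$-part of $H^1(E_F(\CC),\Qb)^-$. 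These are not interchangeable: the $\pi(f\otimes\chi)$-eigenspace of the modular curve can strictly contain the image of $e_\chi e_F$, because $f\otimes\chi$ may be old at the relevant level or may even equal $f$ (CM curves). Your phrase ``push these classes forward via $\pi_*$ and assemble them'' silently assumes that projecting the pushed-forward regulator onto the $\chi$-component extracts exactly the $f\otimes\chi$-information; justifying this is Proposition \ref{divisibility} ($e_{f\otimes\chi}$ divides $e_\chi e_F$ in $\TG$), which rests on the adelic identity $\nu_F(T(p))=T(p)\,\sigma_p$ of Lemma \ref{lem4}. Without it you cannot conclude that the $\chi$-component of your regulator image equals $L'(f\otimes\chi,0)$ times a $\Qb$-rational vector. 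Your closing reduction to ``the non-vanishing of each $L'(E\otimes\chi,0)$'' is therefore too optimistic: non-vanishing shows that \emph{if} the $\chi$-component is a rational multiple of $L'(E\otimes\chi,0)\cdot\eta_\chi$ then it is nonzero, but producing that identity is exactly Beilinson's theorem transported correctly through the Hecke algebra.

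Two further points. The one-dimensionality of each $\chi$-isotypic component is an elementary fact about $H^1$ of an elliptic curve (Lemma \ref{H1EF RGmod}), not a consequence of the simple zero of $L(E\otimes\chi,s)$ at $s=0$; invoking the order of vanishing here is circular in a Beilinson-type argument. And the newform $f\otimes\chi$ generally has level $N_{f\otimes\chi}$ not dividing $NM$, so $\pi(f\otimes\chi)$ need not occur in $H^1(X_1(NM))$ over $\QQ$; it appears only after base change to $\QQ(\zeta_M)$ or $F$, where the curve becomes disconnected. This is why the paper works from the outset with the adelic curve $\overline{M}_{K_1(N)_F}\cong X_1(N)_F$ rather than a geometrically connected curve of larger level. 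Your averaging over $\Gal(\QQ(\zeta_M)/F)$ is a step in that direction, but it must be combined with the Hecke-algebra divisibility above before the determinant computation in (\ref{det REF}) goes through.
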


We prove Theorem \ref{regEF thm} by using Beilinson's theorem on a suitable modular curve. More precisely, we make use of a result of Schappacher and Scholl \cite{schappacher-scholl} on the (non geometrically connected) modular curve $X_1(N)_F$, where $N$ is the conductor of $E$. We therefore need to work in the adelic setting. We establish a divisibility statement in the Hecke algebra of $X_1(N)_F$ in order to get the desired result for $E_F$.

The methods used in this article are of inexplicit nature and do not give rise, in general, to explicit divisors. However, Theorem \ref{LEchi0 thm} and its corollary can be made explicit in the particular case of the elliptic curve $E=X_1(11)$ and the abelian extension $F=\QQ(\zeta_{11})^+$. In this case we may choose $\ell$ to be supported in the cuspidal subgroup of $E$. The tools for proving this are Kato's explicit version of Beilinson's theorem for the modular curve $X_1(N)_{\QQ(\zeta_m)}$, the work of the author \cite{brunault:smf}, as well as a technique used by Mellit \cite{mellit} to get new relations between values of the elliptic dilogarithm. We hope to give soon an expanded account of this example.

The organization of the article is as follows. In \S 1, we recall well-known facts about $L(E/F,s)$. In \S 2 and \S 3, we recall the definition of the regulator map and we compute it for $E_F$. In \S 4, we explain the adelic setting for modular curves. In \S 5, we prove the divisibility we need in the Hecke algebra. Finally, we give in \S 6 the proofs of the main results. We conclude with some open questions and remarks.

\textbf{Acknowledgements.}
I would like to thank Anton Mellit for the very inspiring discussions which led to the discovery of the example alluded above, which in turn motivated all the results presented here.

\section{The $L$-function of $E_F$}\label{section LEF}

By the Kronecker-Weber theorem, we have $F \subset \QQ(\zeta_m)$ for some $m \geq 1$, so that $G$ is a quotient of $(\ZZ/m\ZZ)^\times$ and $\widehat{G}$ can be identified with a subgroup of the Dirichlet characters modulo $m$.

Let $f = \sum_{n \geq 1} a_n q^n \in S_2(\Gamma_0(N))$ be the newform associated to $E$. For any $\chi \in \widehat{G}$, define $L(E \otimes \chi ,s) := L(f \otimes \chi,s)$, where $f \otimes \chi$ is the unique newform of weight 2 whose $p$-th Fourier coefficient is $a_p \chi(p)$ for every prime $p \nupdownline Nm$. The $L$-function of $E_F$ has the following description.

\begin{pro}\label{pro LEF}
The following identity holds :
\begin{equation}\label{LEF}
L(E/F,s) = \prod_{\chi \in \widehat{G}} L(f \otimes \chi,s).
\end{equation}
\end{pro}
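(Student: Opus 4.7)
The plan is to compare both sides as Euler products and match the local factors prime by prime. Writing $L(E/F,s) = \prod_{\mathfrak{p}} L_\mathfrak{p}(E_F,s)$ with the product running over finite primes of $F$, and $\prod_\chi L(f\otimes \chi,s) = \prod_p \prod_\chi L_p(f\otimes \chi,s)$, it suffices to show that for every rational prime $p$,
\begin{equation*}
\prod_{\mathfrak{p}\mid p} L_\mathfrak{p}(E_F,s) = \prod_{\chi \in \widehat{G}} L_p(f\otimes \chi,s).
\end{equation*}

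The main case is an unramified prime $p \nmid Nm$. First I would recall that since $F\subset \QQ(\zeta_m)$ and $p\nmid m$, the prime $p$ is unramified in $F$; let $\mathrm{Frob}_p \in G$ be its Frobenius, of order $f$, and let $g = d/f$ be the number of primes of $F$ above $p$, each of residue degree $f$. Factor the Hecke polynomial at $p$ as $1 - a_p T + p T^2 = (1-\alpha_p T)(1-\beta_p T)$ with $\alpha_p \beta_p = p$. On the left-hand side, each prime $\mathfrak{p}\mid p$ contributes
\begin{equation*}
L_\mathfrak{p}(E_F,s) = (1-\alpha_p^f p^{-fs})^{-1}(1-\beta_p^f p^{-fs})^{-1},
\end{equation*}
so the full local factor at $p$ is this raised to the power $g$. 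On the right-hand side, since $p\nmid Nm$ the twisted newform $f\otimes\chi$ has Fourier coefficient $a_p\chi(p)$ at $p$, hence
\begin{equation*}
\prod_\chi L_p(f\otimes \chi,s) = \prod_\chi (1-\alpha_p \chi(p) p^{-s})^{-1}(1-\beta_p \chi(p) p^{-s})^{-1}.
\end{equation*}
As $\chi$ runs over $\widehat{G}$, the character value $\chi(p)=\chi(\mathrm{Frob}_p)$ takes each $f$-th root of unity exactly $g$ times, so the identity $\prod_{\zeta^f=1}(1-x\zeta) = 1-x^f$ yields precisely $(1-\alpha_p^f p^{-fs})^{-g}(1-\beta_p^f p^{-fs})^{-g}$, matching the left-hand side.

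To treat the remaining primes $p \mid Nm$, I would invoke Artin formalism for compatible systems of Galois representations. Since $F/\QQ$ is abelian, $\mathrm{Ind}_{G_F}^{G_\QQ} \mathbf{1} = \bigoplus_\chi \chi$ as representations of $G_\QQ$, and tensoring with the compatible system $\{H^1_{\text{\'et}}(E_{\Qb},\QQ_\ell)\}$ gives an equality of $L$-functions $L(E/F,s) = \prod_\chi L(E,\chi,s)$ with correct local factors at all primes (including ramified ones) by the standard properties of Artin $L$-factors and inertia invariants. It then remains to identify $L(E,\chi,s)$ with $L(f\otimes\chi,s)$, which is modularity combined with the Atkin--Lehner theory of character twists of newforms, since both sides agree at every prime outside $Nm$ and the local factors at primes dividing $Nm$ are then forced by the functional equation and the uniqueness of the newform in its isogeny class.

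The only genuinely delicate step is the matching of Euler factors at primes that are ramified in $F$ or at which $E$ has bad reduction, since the elementary calculation above breaks down there; this is where one must appeal to the precise form of Artin induction at ramified places together with the local Langlands correspondence for $\GL_2$ hidden in the definition of $f\otimes\chi$.
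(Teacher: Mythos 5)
Your plan is essentially correct, and the part of it that carries the real weight is the same argument as the paper's. The paper works entirely at the level of compatible systems of $\ell$-adic representations: by modularity $L(E/F,s)=L(\rho|_{G_F},s)$, Artin formalism gives $L(\rho|_{G_F},s)=L(\Ind_{G_F}^{G_\QQ}(\rho|_{G_F}),s)$, the projection formula gives $\Ind_{G_F}^{G_\QQ}(\rho|_{G_F})\cong\Ind_{G_F}^{G_\QQ}(\mathbf{1}_{G_F})\otimes\rho\cong\bigoplus_{\chi}\rho\otimes\chi$, and finally $\rho\otimes\chi$ is identified with the compatible system attached to $f\otimes\chi$ because an irreducible compatible system is determined by the traces of all but finitely many Frobenius elements. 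Your explicit matching of Euler factors at $p\nmid Nm$ (via $\prod_{\zeta^f=1}(1-x\zeta)=1-x^f$ and the fact that $\chi(\mathrm{Frob}_p)$ hits each $f$-th root of unity exactly $g$ times) is correct, but it is subsumed by your own second paragraph: once you invoke Artin formalism for the compatible system, the identity holds at every prime at once, so the good-prime computation is a sanity check rather than a necessary step.

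The one step whose stated justification would not go through is the claim that the Euler factors of $f\otimes\chi$ at $p\mid Nm$ are ``forced by the functional equation and the uniqueness of the newform in its isogeny class.'' The functional equation does not determine individual local factors, and a newform has no isogeny class. What actually identifies the local factors of $L(\rho\otimes\chi,s)$ with those of $L(f\otimes\chi,s)$ at the bad primes is local-global compatibility for the Galois representations attached to weight-2 newforms (Carayol, building on Deligne--Langlands) --- i.e.\ precisely the ``local Langlands correspondence hidden in the definition of $f\otimes\chi$'' that you mention in your closing sentence. The paper packages this as the single assertion that the compatible system attached to $f\otimes\chi$ is $\rho\otimes\chi$, so that $L(\rho\otimes\chi,s)=L(f\otimes\chi,s)$ as full Euler products. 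Replace the appeal to the functional equation by that assertion and your proof is complete; as written, that sentence is the gap.
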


\begin{proof}
Let $\rho = (\rho_\ell)_{\ell}$ be the compatible system of $2$-dimensional $\ell$-adic representations of $G_\QQ$ attached to $f$ by Deligne \cite{deligne:GL2}. By modularity $L(E/F,s) = L(\rho |_{G_F},s)$. Using Artin's formalism for $L$-functions, we have
\begin{equation}
L(\rho |_{G_F},s) = L(\Ind_{G_F}^{G_\QQ} (\rho |_{G_F}),s).
\end{equation}
If $\mathbf{1}_{G_F}$ denotes the trivial representation of $G_F$, we have
\begin{align}
\nonumber \Ind_{G_F}^{G_\QQ} (\rho |_{G_F}) & = \Ind_{G_F}^{G_\QQ} (\mathbf{1}_{G_F} \otimes \Res_{G_\QQ}^{G_F} \rho )\\
 & \cong \Ind_{G_F}^{G_\QQ} (\mathbf{1}_{G_F}) \otimes  \rho\\
\nonumber & \cong \bigoplus_{\chi \in \widehat{G}} \rho \otimes \chi.\label{eq6}
\end{align}
(Here we chose embeddings $\Qb \hookrightarrow \overline{\QQ_\ell}$.) Finally, since an irreducible $\ell$-adic representation of $G_\QQ$ is determined by the traces of all but finitely many Frobenius elements, the compatible system associated to $f \otimes \chi$ is $\rho \otimes \chi$, so that $L(\rho \otimes \chi,s) = L(f \otimes \chi,s)$ for any $\chi \in \widehat{G}$.
\end{proof}

Since each $L(f \otimes \chi,s)$ has a simple zero at $s=0$, we get
\begin{equation}\label{LEF0}
\frac{L^{(d)}(E/F,0)}{d!} = \prod_{\chi \in \widehat{G}} L'(f \otimes \chi,0),
\end{equation}
where $L^{(d)}(E/F,0)$ denotes the $d$-th derivative at $0$.

\begin{pro}\label{LEF20}
We have $L(E/F,2) \sim_{\QQ^\times} \pi^{2d} L^{(d)}(E/F,0)$.
\end{pro}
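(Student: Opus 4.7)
The plan is to apply the standard functional equation to each twisted $L$-function $L(f\otimes\chi,s)$ individually, then take the product over $\chi\in\widehat{G}$ and invoke Proposition \ref{pro LEF} together with the identity (\ref{LEF0}).

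For each $\chi\in\widehat{G}$, let $N_\chi$ denote the level of the newform $f\otimes\chi$ and set $\Lambda(f\otimes\chi,s) = (N_\chi^{1/2}/(2\pi))^s\,\Gamma(s)\,L(f\otimes\chi,s)$. Since $f$ has real Fourier coefficients, $\overline{f\otimes\chi}=f\otimes\overline{\chi}$, and the functional equation of Hecke reads $\Lambda(f\otimes\chi,s)=\varepsilon_\chi\,\Lambda(f\otimes\overline{\chi},2-s)$ for a root number $\varepsilon_\chi$ of modulus one. Evaluating at $s=2$ on the left and $s=0$ on the right, and using that the simple pole of $\Gamma(s)$ at $s=0$ exactly compensates the simple zero of $L(f\otimes\overline{\chi},s)$ there, I obtain
\begin{equation*}
\frac{N_\chi}{4\pi^2}\,L(f\otimes\chi,2) \;=\; \varepsilon_\chi\,L'(f\otimes\overline{\chi},0).
\end{equation*}

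Next, I multiply these identities over $\chi\in\widehat{G}$. By Proposition \ref{pro LEF}, the product on the left becomes $(4\pi^2)^{-d}\prod_\chi N_\chi \cdot L(E/F,2)$. For the right-hand side, the involution $\chi\mapsto\overline{\chi}$ permutes $\widehat{G}$, so $\prod_\chi L'(f\otimes\overline{\chi},0) = \prod_\chi L'(f\otimes\chi,0) = L^{(d)}(E/F,0)/d!$ by (\ref{LEF0}). This yields
\begin{equation*}
L(E/F,2) \;=\; \frac{(4\pi^2)^d}{d!\,\prod_\chi N_\chi}\,\Bigl(\prod_\chi \varepsilon_\chi\Bigr)\,L^{(d)}(E/F,0).
\end{equation*}

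It remains to verify that the prefactor lies in $\QQ^\times$. The quantity $d!\,\prod_\chi N_\chi$ is a positive integer. For the root numbers, applying the functional equation twice gives $\varepsilon_\chi\varepsilon_{\overline{\chi}}=1$, so the non-real characters contribute $1$ in pairs $\{\chi,\overline{\chi}\}$, while each real character ($\chi^2=1$) contributes $\varepsilon_\chi\in\{\pm 1\}$. Hence $\prod_\chi\varepsilon_\chi\in\{\pm 1\}\subset\QQ^\times$, which concludes the argument. The whole proof is a routine unwinding of the functional equation and the factorization of $L(E/F,s)$; the only mildly delicate step is the root-number bookkeeping above, which I view as the main (and only) obstacle.
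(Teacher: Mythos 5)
Your proof is correct and follows essentially the same route as the paper: apply the functional equation $\Lambda(f\otimes\chi,s)=\varepsilon_\chi\Lambda(f\otimes\overline{\chi},2-s)$ to each twist, take the product over $\widehat{G}$, and use $\varepsilon_\chi\varepsilon_{\overline{\chi}}=1$ to see that the product of root numbers is $\pm 1$. The paper phrases the sign as the Atkin--Lehner pseudo-eigenvalue $-w_{f\otimes\chi}$ and computes $w^2=\prod_\chi w_{f\otimes\chi}w_{f\otimes\overline{\chi}}=1$ in one step rather than splitting into real and complex characters, but this is only a cosmetic difference.
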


\begin{proof}
Let $N_{f \otimes \chi}$ be the level of the newform $f \otimes \chi$. Putting $\Lambda(f \otimes \chi,s) = N_{f \otimes \chi}^{s/2} (2\pi)^{-s} \Gamma(s) L(f \otimes \chi,s)$, we have \cite[\S 5]{diamond-im}
\begin{equation}\label{fun eq fchi}
\Lambda(f \otimes \chi,s) = -w_{f \otimes \chi} \Lambda(f \otimes \overline{\chi},2-s) \qquad (s \in \CC)
\end{equation}
where $w_{f \otimes \chi}$ is the pseudo-eigenvalue of $f \otimes \chi$ with respect to the Atkin-Lehner involution of level $N_{f \otimes \chi}$. Note that (\ref{fun eq fchi}) implies $w_{f \otimes \chi} w_{f \otimes \overline{\chi}}=1$. Letting $w=\prod_{\chi \in \widehat{G}} w_{f \otimes \chi}$, we have
\begin{equation}
w^2=\prod_{\chi \in \widehat{G}} w_{f \otimes \chi} w_{f \otimes \overline{\chi}} = 1
\end{equation}
so that $w=\pm 1$. Moreover $\Lambda(f \otimes \chi,0) = L'(f \otimes \chi,0)$ and $\Lambda(f \otimes \overline{\chi},2)=(N_{f \otimes \overline{\chi}}/4\pi^2) L(f \otimes \overline{\chi},2)$. Taking the product over $\chi \in \widehat{G}$ yields the result.
\end{proof}

\section{The regulator map on Riemann surfaces} \label{section reg map}

In this section, we recall the definition of the regulator map on compact Riemann surfaces \cite[\S 1]{esnault-viehweg}, and its computation in the case of elliptic curves.

Let $X$ be a compact connected Riemann surface, and $\mathcal{M}(X)$ be its field of meromorphic functions. For any $f,g \in \mathcal{M}(X)^\times$, consider the $1$-form
\begin{equation}
\eta(f,g) := \log |f| \cdot \darg(g) - \log |g| \cdot \darg(f).
\end{equation}
For any $f \in \mathcal{M}(X) \backslash \{0,1\}$, the differential form $\eta(f,1-f)$ is exact on $X \backslash f^{-1}(\{0,1,\infty\})$. More precisely $\eta(f,1-f) = d(D \circ f)$, where $D$ is the Bloch-Wigner dilogarithm function \cite{zagier:blochwigner}. Let $K_2(\mathcal{M}(X))$ be the Milnor $K_2$-group associated to $\mathcal{M}(X)$. The \emph{regulator map} on $X$ is the unique linear map
\begin{equation}\label{regXC}
\reg_X  : K_2(\mathcal{M}(X)) \to H^1(X,\RR)
\end{equation}
such that for any $f,g \in \mathcal{M}(X)^\times$ and any holomorphic $1$-form $\omega$ on $X$, we have
\begin{equation}\label{def regXC}
\int_X \reg_X \{f,g\} \wedge \omega = \frac{1}{2\pi} \int_X \eta(f,g) \wedge \omega.
\end{equation}
The map $\reg_X$ is well-defined by exactness of $\eta(f,1-f)$ and Stokes' theorem. The construction of $\reg_X$ easily extends to the case where $X$ is compact but not connected. Indeed, put $\mathcal{M}(X) := \prod_{i=1}^r \mathcal{M}(X_i)$ where $X_1,\ldots,X_r$ are the connected components of $X$. Then $K_2(\mathcal{M}(X)) \cong \bigoplus_i K_2(\mathcal{M}(X_i))$ as well as $H^1(X,\RR) \cong \bigoplus_i H^1(X_i,\RR)$, and we define $\reg_X$ to be the direct sum of the maps $\reg_{X_i}$ for $1 \leq i \leq r$.

Let us recall the classical computation of the regulator map on a complex torus \cite[\S 4]{beilinson:1}. Let $E_\tau := \CC/(\ZZ+\tau \ZZ)$ with $\tau \in \CC$, $\Im(\tau)>0$. The map $z \mapsto \exp(2i\pi z)$ induces an isomorphism $E_\tau \cong \CC^\times / q^\ZZ$, where $q := \exp(2i\pi \tau)$. Let $D_q : E_\tau \to \RR$ be the Bloch elliptic dilogarithm, defined by $D_q([x])= \sum_{n=-\infty}^{\infty} D(xq^n)$ for any $x \in \CC^\times$. We will also use the function $J_q : E_\tau \to \RR$, which is defined as follows. Let $J : \CC^\times \to \RR$ be the function defined by $J(x)=\log |x| \cdot \log |1-x|$ if $x \neq 1$, and $J(1)=0$. Following \cite{zagier:blochwigner}, we put
\begin{equation}
J_q([x]) = \sum_{n=0}^{\infty} J(xq^n) - \sum_{n=1}^{\infty} J(x^{-1} q^n) + \frac13 \log^2 |q| \cdot B_3\bigl(\frac{\log |x|}{\log |q|}\bigr) \qquad (x \in \CC^\times)
\end{equation}
where $B_3 =X^3-\frac32 X^2 + \frac{X}{2}$ is the third Bernoulli polynomial. The function $J_q$ is well-defined since $J(x)+J(\frac{1}{x})=\log^2 |x|$ and $B_3(X+1)-B_3(X)=3X^2$. Both functions $D_q$ and $J_q$ extend to linear maps $\ZZ[E_\tau] \to \RR$, by setting $D_q(\sum_{i} n_i [P_i]) := \sum_i n_i D_q(P_i)$ and similarly for $J_q$.

\begin{definition} For any $f,g \in \mathcal{M}(E_\tau)^\times$ with divisors $\dv(f) = \sum_i m_i [P_i]$ and $\dv(g) = \sum_j n_j [Q_j]$, the divisor $\beta(f,g) \in \ZZ[E_\tau]$ is given by
\begin{equation}
\beta(f,g) = \sum_{i,j} m_i n_j [P_i-Q_j].
\end{equation}
\end{definition}
The following classical result expresses the regulator map on $E_\tau$ in terms of $D_q$ and $J_q$.

\begin{pro}\label{regulateur Etau}
For any $f,g \in \mathcal{M}(E_\tau)^\times$, we have
\begin{equation}
\int_{E_\tau} \eta(f,g) \wedge dz = (D_q-iJ_q)(\beta(f,g)).
\end{equation}
\end{pro}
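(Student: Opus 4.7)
The plan is to reduce to an explicit computation with theta functions and then match series expansions term by term.

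First, both sides of the claimed identity are bi-additive with respect to multiplication in $f$ and $g$, and vanish when either function is a nonzero constant. Using the Jacobi-type product
\[ \theta_q(x) = (1-x) \prod_{n \geq 1}(1-q^n x)(1-q^n x^{-1}), \]
which satisfies the quasi-periodicity relation $\theta_q(qx) = -x^{-1} \theta_q(x)$, every nonzero meromorphic function on $E_\tau$ is, up to a nonzero multiplicative constant, a product of translates $\theta_q(x/a_i)^{m_i}$ subject to constraints that make the product $q$-periodic (these constraints encode the fact that the divisor of an elliptic function has degree $0$ and sums to $0$ in the group law of $E_\tau$). By bi-additivity, it suffices to establish the formula for pairs of such theta quotients.

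For such a pair, expand
\[ \eta(f, g) = \log|f| \, \darg(g) - \log|g| \, \darg(f) \]
using the explicit $\log$-series coming from the product expression for $\theta_q$. Writing $dz = (2i\pi)^{-1} d\log x$ on $\CC^\times$ and integrating $\eta(f, g) \wedge dz$ over a fundamental annulus for $q^\ZZ$, Fourier orthogonality in the angular direction collapses most cross-terms. What survives for each pair of divisor points $([a_i], [b_j])$, with $\dv(g) = \sum_j n_j [b_j]$, is precisely the combination of series defining $(D_q - iJ_q)(a_i b_j^{-1})$, including the Bernoulli correction term in $J_q$. Weighting by $m_i n_j$ and summing then produces $(D_q - iJ_q)(\beta(f,g))$, as desired.

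The main obstacle is bookkeeping. One must track the monomial and quadratic exponential corrections needed to enforce genuine $q$-periodicity of the theta quotients, and recognize the cubic Bernoulli term $\frac{1}{3}\log^2|q| \, B_3(\log|x|/\log|q|)$ in $J_q$ as the contribution arising from the non-compactness of the inner sum (equivalently, from the non-trivial cohomology class picked up by the logarithmic growth of $\log|\theta_q|$). The sign $-i$ in $D_q - iJ_q$, rather than $+i$, is forced by the appearance of $dz$ rather than $d\bar z$ in the integrand, and must be propagated carefully through each step.
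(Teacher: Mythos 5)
Your strategy is sound but takes a genuinely different route from the paper, which disposes of this proposition in two citations: it quotes Beilinson and Deninger for the identity $\int_{E_\tau}\eta(f,g)\wedge dz = -\frac{\Im(\tau)^2}{\pi}K_{2,1,\tau}(\beta(f,g))$, where $K_{2,1,\tau}$ is the Eisenstein--Kronecker series, and then Bloch and Zagier for the Fourier expansion $-\frac{\Im(\tau)^2}{\pi}K_{2,1,\tau}=D_q-iJ_q$. You instead propose to carry out directly the computation that underlies those references: reduce to theta quotients by bi-additivity (the vanishing of $\int_{E_\tau}\eta(c,g)\wedge dz$ for constant $c$, which you assert, does hold --- it follows from $\frac{dg}{g}\wedge dz=0$ and a Stokes argument killing the logarithmic singularities --- and is genuinely needed, since the theta-product representation of an elliptic function is only determined up to a constant and a monomial factor), expand $\log|\theta_q|$ and $\darg(\theta_q)$ as $q$-series, and integrate over a fundamental annulus using angular Fourier orthogonality. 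This is essentially the computation carried out in the references the paper cites, and you correctly locate the two delicate points: the $B_3$ correction in $J_q$ arises from the quasi-periodicity defect $\theta_q(qx)=-x^{-1}\theta_q(x)$, and the sign of $-i$ comes from pairing against $dz$ rather than $d\overline{z}$. What your route buys is a self-contained verification, including the normalization constant, at the cost of substantial bookkeeping; what the paper's route buys is brevity. One reservation: the decisive step --- that the diagonal terms surviving Fourier orthogonality assemble exactly into the defining series of $D_q$ and $J_q$ with the stated constants --- is asserted rather than performed, so as written this is a correct roadmap rather than a complete proof; it would need either the term-by-term identification made explicit or, as in the paper, a citation to where that identification is done.
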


\begin{proof}
We have $\int_{E_\tau} \eta(f,g) \wedge dz = -\frac{\Im(\tau)^2}{\pi} K_{2,1,\tau}(\beta(f,g))$ by \cite[\S 4.3]{beilinson:1} and \cite[(6.2)]{deninger:hecke1}, where $K_{2,1,\tau}$ is the linear extension of the following Eisenstein-Kronecker series on $E_\tau$ :
\begin{equation}
K_{2,1,\tau}(z) := \sum_{\substack{\lambda \in \ZZ+\tau \ZZ\\ \lambda \neq 0}} \frac{\exp \bigl( \frac{2i\pi}{\tau-\overline{\tau}} (z \overline{\lambda} - \overline{z} \lambda)\bigr)}{\lambda^2 \overline{\lambda}} \qquad (z \in \CC/(\ZZ+\tau \ZZ)).
\end{equation}
The result now follows from the formula $-\frac{\Im(\tau)^2}{\pi} K_{2,1,\tau} = D_q - i J_q$, for which we refer to \cite[Thm 10.2.1]{bloch:irvine} and \cite[\S 2, p. 616]{zagier:blochwigner}.
\end{proof}

\section{The regulator map on $E_F$} \label{section regEF}

Let $X$ be a connected (but not necessarily geometrically connected) smooth projective curve over $\QQ$. Its function field $\QQ(X)$ embeds into $\mathcal{M}(X(\CC))$, so we get a natural map $K_2(\QQ(X)) \to K_2(\mathcal{M}(X(\CC)))$. Let $c$ denote the complex conjugation on $X(\CC)$. For any $f,g \in \QQ(X)^\times$, we have $c^* \eta(f,g) = -\eta(f,g)$, so that (\ref{regXC}) induces a map
\begin{equation}\label{reg K2QX}
K_2(\QQ(X)) \to H^1(X(\CC),\RR)^{-},
\end{equation}
where $(\cdot)^-$ denotes the $(-1)$-eigenspace of $c^*$.

Let $K_2(X)$ be the Quillen algebraic $K_2$-group associated to $X$. Recall that the motivic cohomology group $H^2_{\mathcal{M}}(X,\QQ(2)) := K_2^{(2)}(X)$ is defined as the second Adams eigenspace of $K_2(X) \otimes \QQ$. The exact localization sequence in $K$-theory yields a canonical injective map $K_2(X) \otimes \QQ \hookrightarrow K_2(\QQ(X)) \otimes \QQ$ which is compatible with the Adams operations, so that in fact $K_2^{(2)}(X)=K_2(X) \otimes \QQ$. The \emph{integral subspace} $H^2_{\mathcal{M}/\ZZ}(X,\QQ(2)) \subset H^2_{\mathcal{M}}(X,\QQ(2))$ is the image of the map $K_2(\mathcal{X}) \otimes \QQ \to K_2(X) \otimes \QQ$ for any proper regular model $\mathcal{X}/\ZZ$ of $X$ (see \cite{scholl:integral_elements} for a definition in a more general setting). Tensoring (\ref{reg K2QX}) with $\QQ$ and restricting to the integral subspace gives the \emph{Beilinson regulator map} on $X$ :
\begin{equation}
\reg_X : H^2_{\mathcal{M}/\ZZ}(X,\QQ(2)) \to H^1(X(\CC),\RR)^-.
\end{equation}
Note that the real vector space $H^1(X(\CC),\RR)^-$ admits the natural $\QQ$-structure $H_X := H^1(X(\CC),\QQ)^-$.

Any finite morphism $\varphi : X \to Y$ between smooth projective curves over $\QQ$ induces maps $\varphi^* : K_2(Y) \to K_2(X)$ and $\varphi_* : K_2(X) \to K_2(Y)$, the latter being defined by $K_2(X) \xrightarrow{\cong} K'_2(X) \xrightarrow{\varphi_*} K'_2(Y) \xleftarrow{\cong} K_2(Y)$. It is known that $\varphi^* \otimes \QQ$ and $\varphi_* \otimes \QQ$ preserve the integral subspaces \cite[Thm 1.1.6(i)]{scholl:integral_elements}. Moreover, the Beilinson regulator maps associated to $X$ and $Y$ are compatible with $\varphi^*$ and $\varphi_*$ (this can be seen at the level of Riemann surfaces).

Let us return to our elliptic curve $E$. Fix an isomorphism $E(\CC) \cong E_\tau$ which is compatible with complex conjugation, and let $q=\exp(2i\pi \tau)$. Let $D_E$ and $J_E$ be the real-valued functions on $E(\CC)$ induced by $D_q$ and $J_q$ respectively\footnote{The lattice $\ZZ+\tau \ZZ$ is uniquely determined by $E$, and $q$ is a well-defined real number such that $0<|q|<1$. But the pair $(D_E,J_E)$ is defined only up to sign (choosing an isomorphism $E(\CC) \cong E_{\tau}$ amounts to specifying an orientation of $E(\RR)$).}. The space $H^1(E(\CC),\QQ)^{\pm}$ is generated by the $1$-form $\eta^\pm$, with
\begin{equation}\label{eta pm}
\eta^+ =dz+d\overline{z} \qquad \textrm{and} \qquad \eta^- = \frac{dz-d\overline{z}}{\tau-\overline{\tau}}.
\end{equation}

%

\begin{lem}\label{regEC fg}
Let $f,g \in \CC(E)^\times$ and $\ell = \beta(f,g)$. We have
\begin{equation}
\reg_{E(\CC)} \{f,g\} = -\frac{1}{2\pi} \bigl( D_E(\ell) \cdot \eta^- + \frac{J_E(\ell)}{2\Im(\tau)} \cdot \eta^+ \bigr).
\end{equation}
\end{lem}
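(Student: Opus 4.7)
The plan is to use the defining property \eqref{def regXC} of the regulator map to pin down the coefficients of $\reg_{E(\CC)}\{f,g\}$ in the basis $\{\eta^-,\eta^+\}$ of $H^1(E(\CC),\RR)$. Since this cohomology group is two-dimensional over $\RR$, I would first write
$$\reg_{E(\CC)}\{f,g\} = \alpha\, \eta^- + \beta\, \eta^+$$
with unknowns $\alpha,\beta \in \RR$; determining them is tantamount to computing the integral of this class against a basis of holomorphic $1$-forms. Because $H^{1,0}(E_\tau) = \CC \cdot dz$ is one-dimensional, pairing against the single complex form $\omega = dz$ suffices: the resulting complex number carries both real unknowns in its real and imaginary parts, so one application of \eqref{def regXC} solves the problem.

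Next I would compute the linear functional $\alpha'\eta^- + \beta'\eta^+ \mapsto \int_{E_\tau}(\alpha'\eta^- + \beta'\eta^+) \wedge dz$ explicitly. Writing $z = x + iy$ one has $dz \wedge d\bar z = -2i\, dx \wedge dy$, and the fundamental parallelogram of $\ZZ + \tau\ZZ$ has area $\Im(\tau)$; substituting the definitions \eqref{eta pm} and using $dz \wedge dz = 0$, a direct calculation gives
$$\int_{E_\tau} \eta^- \wedge dz = -1, \qquad \int_{E_\tau} \eta^+ \wedge dz = 2i\, \Im(\tau).$$
Hence the left-hand side of \eqref{def regXC} specialized to $\omega = dz$ evaluates to $-\alpha + 2i\, \Im(\tau)\,\beta$. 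By Proposition \ref{regulateur Etau}, the right-hand side equals $\frac{1}{2\pi}\bigl(D_E(\ell) - i J_E(\ell)\bigr)$. Separating real and imaginary parts yields $\alpha = -\frac{D_E(\ell)}{2\pi}$ and $\beta = -\frac{J_E(\ell)}{4\pi\, \Im(\tau)}$, which is exactly the identity claimed by the lemma.

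No step looks like a genuine obstacle; everything reduces to Proposition \ref{regulateur Etau} together with a one-line period computation. The only conceptual point worth flagging is that the lemma is stated for arbitrary $f,g \in \CC(E)^\times$, so the $\eta^+$-component is typically nonzero: the $c^*$-equivariance argument of \S \ref{section regEF} that forces the regulator into the $(-1)$-eigenspace applies only when $f,g$ are defined over a subfield stable under complex conjugation, which is not assumed here.
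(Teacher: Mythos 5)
Your proof is correct and follows exactly the paper's own argument: expand the regulator class in the basis $\{\eta^-,\eta^+\}$, pair against $dz$ to get $-\alpha + 2i\Im(\tau)\beta$, and identify real and imaginary parts using the defining property of $\reg_{E(\CC)}$ together with Proposition \ref{regulateur Etau}. The explicit period computations and the closing remark about the $\eta^+$-component are accurate but only make explicit what the paper leaves implicit.
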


\begin{proof}
Put $\reg_{E(\CC)} \{f,g\} = a^+ \eta^+ + a^- \eta^-$ with $a^+,a^- \in \RR$. Taking the wedge product with $dz$ and integrating over $E(\CC)$ yields
\begin{equation}
\int_{E(\CC)} \reg_{E(\CC)} \{f,g\} \wedge dz = -a^- + 2i \Im(\tau) a^+.
\end{equation}
Using (\ref{def regXC}) with Prop. \ref{regulateur Etau} and identifying the real and imaginary parts gives the lemma.
\end{proof}

Let $\Sigma$ be the set of embedding of $F$ into $\CC$. The embedding $\iota : \Qb \hookrightarrow \CC$ induces a distinguished element $\iota \in \Sigma$. Note that $E_F(\CC)$ is the disjoint union of $d$ copies of $E(\CC)$, so that
\begin{equation}\label{decomp H1EF}
H^1(E_F(\CC),\RR) \cong \bigoplus_{\psi \in \Sigma} H^1(E(\CC),\RR)
\end{equation}
and $H^1(E_F(\CC),\QQ)$ decomposes accordingly. The group $G$ acts from the right on $E_F = E \times_{\operatorname{Spec} \QQ} \operatorname{Spec} F$. This induces a left action of $G$ on $H^1(E_F(\CC),\QQ)$. For any character $\chi \in \widehat{G}$, consider the idempotent
\begin{equation}
e_\chi := \frac{1}{|G|} \sum_{\sigma \in G} \chi(\sigma) \cdot [\sigma] \in \Qb[G].
\end{equation}
It acts on $H^1(E_F(\CC), \Qb \otimes \RR)$. For any $\psi \in \Sigma$, let $\eta^{\pm}(\psi)$ be the $1$-form $\eta^{\pm}$ sitting in the $\psi$-component of (\ref{decomp H1EF}). Define
\begin{equation}
\eta_\chi = \begin{cases} e_\chi(\eta^-(\iota)) & \textrm{if $\chi$ is even}\\
e_\chi(\eta^+(\iota)) & \textrm{if $\chi$ is odd}.
\end{cases}
\end{equation}

\begin{lem}\label{eta chi}
If $c : E_F(\CC) \to E_F(\CC)$ is the map induced by complex conjugation on $\operatorname{Spec} \CC$, then $c^* \eta_\chi = - \eta_\chi$.
\end{lem}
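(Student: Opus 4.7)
The approach is to decompose the complex conjugation $c$ on $E_F(\CC)$ as a product of two commuting involutions: the ``Galois'' involution $c_G$ given by the action of the element $c_F\in G$ corresponding to complex conjugation of $F$, and the ``fiberwise'' involution $c_E$ acting as complex conjugation on each copy of $E(\CC)$ inside $E_F(\CC)\cong\Sigma\times E(\CC)$ without permuting components. Since $F/\QQ$ is abelian, $c_F$ is well-defined in $G$ and satisfies $\overline{\psi}=\psi\circ c_F$ for every $\psi\in\Sigma$. Together with the description $c:(\psi,P)\mapsto(\overline{\psi},\overline{P})$, this makes the factorization $c=c_G\circ c_E$ transparent, and the two factors commute since they act on disjoint coordinates; hence on cohomology $c^*=c_E^*\circ c_G^*$.

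First I would compute $c_E^*$. From $c_E^*\,dz=d\overline{z}$ one reads off $c_E^*\eta^+=\eta^+$ and $c_E^*\eta^-=-\eta^-$ on each copy of $E(\CC)$, and the same sign rule holds for every form $\eta^\pm(\psi)$ because $c_E$ does not permute components. In particular $c_E^*$ commutes with the $G$-action on $H^1(E_F(\CC),\Qb\otimes\RR)$, which only affects the $\Sigma$-coordinate. Therefore $c_E^*\eta_\chi=-\eta_\chi$ when $\chi$ is even (since $\eta_\chi$ is then built from $\eta^-$) and $c_E^*\eta_\chi=+\eta_\chi$ when $\chi$ is odd.

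For $c_G^*$, the standard relation $[\sigma]\,e_\chi=\chi(\sigma)^{-1}e_\chi$ in $\Qb[G]$, specialized to the involution $\sigma=c_F$, yields $c_G^*\eta_\chi=\chi(c_F)\,\eta_\chi$. By definition of parity, $\chi(c_F)=+1$ for $\chi$ even and $\chi(c_F)=-1$ for $\chi$ odd. Multiplying the two contributions, $c^*\eta_\chi=(-1)(+1)\eta_\chi=-\eta_\chi$ if $\chi$ is even, and $c^*\eta_\chi=(+1)(-1)\eta_\chi=-\eta_\chi$ if $\chi$ is odd, as required. The only genuinely conceptual point---and what I would expect to be the main (mild) obstacle---is cleanly identifying the decomposition $c=c_G\circ c_E$: one must recognise that the a priori different ``complex conjugation coming from $\operatorname{Spec}\CC$'' and ``action of $c_F\in G$'' differ precisely by the fiberwise involution $c_E$, which itself commutes with the whole $G$-action. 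Once this is in place, the sign bookkeeping above is mechanical.
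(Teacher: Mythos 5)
Your argument is correct and is essentially the paper's proof in a slightly different packaging: the factorization $c=c_G\circ c_E$ into a component-permuting involution and a fiberwise one encodes exactly the paper's identity $c^*\eta^{\pm}(\psi)=\pm\eta^{\pm}(\overline{\psi})$, and your relation $c_G^*e_\chi=\chi(c_F)e_\chi$ is the paper's reindexing step $\chi(-1)\chi(\sigma)=\chi(\overline{\sigma})$. The sign bookkeeping matches, so nothing further is needed.
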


\begin{proof}
For any $\psi \in \Sigma$, we have $c^* \eta^{\pm}(\psi) = \pm \eta^{\pm}(\overline{\psi})$. It follows that
\begin{align*}
c^* \eta_\chi & = \frac{1}{|G|} \sum_{\sigma \in G} \chi(\sigma) c^* (\sigma \cdot \eta^{-\chi(-1)}(\iota))\\
& = \frac{-\chi(-1)}{|G|} \sum_{\sigma \in G} \chi(\sigma) (\overline{\sigma} \cdot \eta^{-\chi(-1)}(\iota)).
\end{align*}
Since $\chi(-1) \chi(\sigma) = \chi(\overline{\sigma})$, we get the result.
\end{proof}

The map $\beta$ induces a linear map $F(E)^\times \otimes F(E)^\times \to \ZZ[E(\Qb)]^{G_F}$, which we still denote by $\beta$. The following proposition computes explicitly the regulator map associated to $E_F$.

\begin{pro}\label{echi reg}
Let $\gamma \in F(E)^\times \otimes F(E)^\times$ and $\ell = \beta(\gamma)$. For any $\chi \in \widehat{G}$, we have
$e_\chi \reg_{E/F}([\gamma]) = \mu_\chi(\ell) \cdot \eta_\chi$, where $\mu_\chi(\ell) \in \Qb \otimes \RR$ is given by
\begin{equation}\label{eq echi reg}
\mu_\chi(\ell) = \begin{cases}
- \frac{1}{2\pi} \sum_{\sigma \in G} \chi(\sigma) \otimes D_E(\ell^{\sigma}) & \textrm{if } \chi \textrm{ is even},\\
- \frac{1}{4\pi \Im(\tau)} \sum_{\sigma \in G} \chi(\sigma) \otimes J_E(\ell^{\sigma}) & \textrm{if } \chi \textrm{ is odd}.
\end{cases}
\end{equation}
\end{pro}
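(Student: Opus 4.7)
The plan is to decompose $\reg_{E/F}([\gamma])$ along the connected components of $E_F(\CC) = \bigsqcup_{\psi \in \Sigma} E(\CC)$, apply Lemma~\ref{regEC fg} on each component, then project onto the $\chi$-isotypic part via $e_\chi$ and cut down to $H^1(E_F(\CC), \Qb \otimes \RR)^-$.

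First I parametrize $\Sigma = \{\psi_\sigma := \iota \circ \sigma : \sigma \in G\}$. The element $\{f,g\} \in K_2(F(E))$ restricts on the $\psi_\sigma$-component to $\{\psi_\sigma(f), \psi_\sigma(g)\} \in K_2(\CC(E))$. Since $\psi_\sigma(f) = \iota(\sigma(f))$ and $\dv(\sigma(f)) = \sigma_*(\dv f)$ by Galois equivariance, we have $\beta(\psi_\sigma(f), \psi_\sigma(g)) = \iota_*(\ell^\sigma)$. Applying Lemma~\ref{regEC fg} componentwise gives
\begin{equation*}
\reg_{E/F}([\gamma]) = -\frac{1}{2\pi} \sum_{\sigma \in G} \Bigl( D_E(\ell^\sigma) \, \eta^-(\psi_\sigma) + \frac{J_E(\ell^\sigma)}{2\Im(\tau)} \, \eta^+(\psi_\sigma) \Bigr).
\end{equation*}

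Next I apply $e_\chi$. With the $G$-action convention used in Lemma~\ref{eta chi}, $\sigma \cdot \eta^\pm(\iota) = \eta^\pm(\psi_{\sigma^{-1}})$, so $\eta^\pm(\psi_\sigma) = \sigma^{-1} \cdot \eta^\pm(\iota)$. The identity $e_\chi \cdot [\sigma^{-1}] = \chi(\sigma) \, e_\chi$ in $\Qb[G]$ then yields $e_\chi \eta^\pm(\psi_\sigma) = \chi(\sigma) \, e_\chi \eta^\pm(\iota)$. Substituting,
\begin{equation*}
e_\chi \reg_{E/F}([\gamma]) = A \cdot e_\chi \eta^-(\iota) + B \cdot e_\chi \eta^+(\iota),
\end{equation*}
where $A = -\frac{1}{2\pi} \sum_\sigma \chi(\sigma) \otimes D_E(\ell^\sigma)$ and $B = -\frac{1}{4\pi\Im(\tau)} \sum_\sigma \chi(\sigma) \otimes J_E(\ell^\sigma)$.

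Finally I single out the component in $(c^*)^-$. Complex conjugation on $E_F(\CC)$ commutes with the right $G$-action (a direct check: both $c$ and the translation $R_\tau : (z,\psi) \mapsto (z, \psi \circ \tau)$ send $(z,\psi)$ to $(\overline z, \overline\psi \circ \tau)$ in either order), hence $c^*$ commutes with $e_\chi$, and $e_\chi \reg_{E/F}([\gamma]) \in (c^*)^-$ because the regulator already lies in $H^1(E_F(\CC),\RR)^-$. The same bookkeeping as in the proof of Lemma~\ref{eta chi} yields $c^* e_\chi \eta^\pm(\iota) = \pm \chi(-1) \, e_\chi \eta^\pm(\iota)$, placing $\eta_\chi$ in $(c^*)^-$ and the other of $e_\chi \eta^\pm(\iota)$ in $(c^*)^+$. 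Projecting the previous identity onto $(c^*)^-$ therefore kills exactly one of the two terms and retains $\mu_\chi(\ell) \cdot \eta_\chi$ with $\mu_\chi(\ell)$ as in (\ref{eq echi reg}). The main obstacle is purely a matter of bookkeeping: keeping the identification $\Sigma \cong G$, the left/right $G$-module conventions, and the $c^*$-$e_\chi$ commutation consistent throughout.
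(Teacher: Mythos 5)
Your proof is correct and follows essentially the same route as the paper: decompose the regulator along the components of $E_F(\CC)$ indexed by $\Sigma \cong G$, apply Lemma \ref{regEC fg} componentwise, and push the result through the idempotent $e_\chi$ using $e_\chi [\sigma^{-1}] = \chi(\sigma) e_\chi$. The only divergence is the endgame: the paper compares $\iota$-components and kills the unwanted term via the parity identities $D_E(\overline{P}) = D_E(P)$ and $J_E(\overline{P}) = -J_E(P)$, whereas you kill it by noting that $e_\chi \eta^+(\iota)$ and $e_\chi \eta^-(\iota)$ lie in opposite $c^*$-eigenspaces while $e_\chi \reg_{E/F}([\gamma])$ lies in $(c^*)^-$ --- an equivalent and equally valid argument.
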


\begin{proof}
Put $r=\reg_{E/F}([\gamma])$. By Lemma \ref{regEC fg}, the $\psi$-component of $r$ is
\begin{equation}
r_\psi  = -\frac{1}{2\pi} \Bigl( D_E(\psi(\ell)) \cdot \eta^-(\psi) + \frac{J_E(\psi(\ell))}{2\Im(\tau)} \cdot \eta^+(\psi) \Bigr).
\end{equation}
Since $e_\chi(r)$ and $\eta_\chi$ belong to the same $G$-eigenspace, it suffices to compare their $\iota$-components. By definition, we have $(\eta_\chi)_\iota = \frac{1}{|G|} \eta^{\chi(-1)}$. Moreover
\begin{align}\label{echi r}
e_\chi(r)_\iota & = \frac{1}{|G|} \sum_{\sigma \in G} \chi(\sigma) \otimes (\sigma \cdot r)_\iota = \frac{1}{|G|} \sum_{\sigma \in G} \chi(\sigma) \otimes r_{\iota \circ \sigma}\\
& = -\frac{1}{2\pi |G|} \sum_{\sigma \in G} \chi(\sigma) \otimes \bigl( D_E(\ell^\sigma) \cdot \eta^- + \frac{J_E(\ell^\sigma)}{2\Im(\tau)} \cdot \eta^+ \bigr).
\end{align}
But $D_E(\overline{P})=D_E(P)$ and $J_E(\overline{P})=-J_E(P)$ for any $P \in E(\CC)$, so that the terms involving $J_E$ (resp. $D_E$) cancel out if $\chi$ is even (resp. odd).
\end{proof}

\section{Modular curves in the adelic setting}\label{section adelic}

Let $\A_f$ be the ring of finite adèles of $\QQ$. For any compact open subgroup $K \subset \GL_2(\A_f)$, there is an associated smooth projective modular curve $\overline{M}_K$ over $\QQ$. For example $X(N) = \overline{M}_{K(N)}$ and $X_1(N) =\overline{M}_{K_1(N)}$, where
\begin{align}
K(N) & = \ker (\GL_2(\Zhat) \to \GL_2(\ZZ/N\ZZ))\\
K_1(N) & = \Bigl\{g \in \GL_2(\Zhat); g \equiv \begin{pmatrix} * & * \\ 0 & 1 \end{pmatrix} \pmod{N}\Bigr\}.
\end{align}

The Riemann surface $\overline{M}_K(\CC)$ can be identified with the compactification of $\GL_2(\QQ) \backslash (\mathfrak{h}^{\pm} \times \GL_2(\A_f)) / K$. The set of connected components of $\overline{M}_K(\CC)$ is in bijection with $\Zhat^\times / \det(K)$. For any $g \in \GL_2(\A_f)$, we have an isomorphism $g : \overline{M}_K \xrightarrow{\cong} \overline{M}_{g^{-1}Kg}$ over $\QQ$, which is given on the complex points by $(\tau,h) \mapsto (\tau,hg)$. For any compact open subgroups $K' \subset K$ of $\GL_2(\A_f)$, we have a finite morphism $\pi_{K',K} : \overline{M}_{K'} \to \overline{M}_K$.

The Hecke algebra $\mathcal{H}_K$ is the space of functions $K \backslash \GL_2(\A_f)/K \to \Qb$ with finite support, equipped with the convolution product \cite{cartier:corvallis}. It acts on $H^1(\overline{M}_K(\CC),\Qb)$ and $\Omega^1(\overline{M}_K) \otimes \Qb$. Let $\TT_K = \TT_{\overline{M}_K}$ be the image of $\mathcal{H}_K$ in $\End_{\Qb} (\Omega^1(\overline{M}_K) \otimes \Qb)$. Let
\begin{equation} \label{poincare duality}
\langle \cdot,\cdot \rangle : H^1(\overline{M}_K(\CC),\RR)^- \times \bigl(\Omega^1(\overline{M}_K) \otimes \RR\bigr) \to \RR
\end{equation}
be the perfect pairing induced by Poincaré duality. For any $T \in \mathcal{H}_K$, we have $\langle T \eta, \omega \rangle = \langle \eta, T' \omega \rangle$, where $T' \in \mathcal{H}_K$ is defined by $T'(g)=T(g^{-1})$, so that the action of $\mathcal{H}_K$ on $H^1(\overline{M}_K(\CC),\Qb \otimes \RR)^-$ factors through $\TT_K$.

Following \cite[1.1.1]{schappacher-scholl}, let $\mathcal{Q}_{K} \subset K_2(\overline{M}_{K}) \otimes \QQ$ be the subspace of Beilinson elements, and let
\begin{equation}
\mathcal{P}_K = \bigcup_{K' \subset K} (\pi_{K',K})_* \mathcal{Q}_{K'} \subset K_2(\overline{M}_K) \otimes \QQ.\end{equation}
Schappacher and Scholl \cite[1.1.2]{schappacher-scholl} proved that $\mathcal{P}_K \subset H^2_{\mathcal{M}/\ZZ}(\overline{M}_K,\QQ(2))$ and that $\reg_{\overline{M}_K}(\mathcal{P}_K)$ is a $\QQ$-structure of $H^1(\overline{M}_K(\CC),\RR)^-$ whose determinant with respect to the natural $\QQ$-structure $H_{\overline{M}_K}$ is given by the leading term of $L(h^1(\overline{M}_K),s)$ at $s=0$.

In the following, we assume $K = \prod_p K_p$, where $K_p$ a compact open subgroup of $\GL_2(\QQ_p)$. The Hecke algebra then decomposes as a restricted tensor product $\mathcal{H}_K= \bigotimes_p ' \mathcal{H}_{K_p}$. For any prime $p$, let $\widetilde{T}(p) \in \mathcal{H}_K$ (resp. $\widetilde{T}(p,p) \in \mathcal{H}_K$) be the characteristic function of $K \begin{pmatrix} \varpi_p & 0 \\ 0 & 1 \end{pmatrix} K$ (resp. $K \begin{pmatrix} \varpi_p & 0 \\ 0 & \varpi_p \end{pmatrix}$), where $\varpi_p \in \A_f^\times$ has component $p$ at the place $p$, and $1$ elsewhere. Let $T(p)$ (resp. $T(p,p)$) be the image of $\widetilde{T}(p)$ (resp. $\widetilde{T}(p,p)$) in $\TT_K$. When $K$ needs to be specified, we write $T(p)_K$ or $T(p)_{\overline{M}_K}$.

For any integer $M \geq 1$, we let $\mathcal{H}^{(M)}_K \subset \mathcal{H}_K$ be the subalgebra generated by the $\mathcal{H}_{K_p}$ for $p \nupdownline M$. We use the notation $\TT^{(M)}_K$ for the corresponding subalgebra of $\TT_K$.

\begin{lem}\label{lem1}
If $K(M) \subset K$ then $\TT^{(M)}_K$ is in the center of $\TT_K$.
\end{lem}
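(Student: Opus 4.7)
The plan is to reduce the commutation statement to two classical facts: that $K_p = \GL_2(\ZZ_p)$ at every prime $p \nmid M$, and that the spherical Hecke algebra at such a $p$ is commutative. Since $\TT_K$ is a quotient of the convolution algebra $\mathcal{H}_K$, it suffices to show the lift $\mathcal{H}_K^{(M)} := \bigotimes'_{p \nmid M} \mathcal{H}_{K_p}$ lies in the center of $\mathcal{H}_K$; then projecting to $\End_{\Qb}(\Omega^1(\overline{M}_K) \otimes \Qb)$ yields the conclusion.

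First I would exploit the hypothesis $K(M) \subset K$. Writing $K = \prod_p K_p$, this forces $K_p \supset K(M)_p = \GL_2(\ZZ_p)$ for every $p \nmid M$. Since $\GL_2(\ZZ_p)$ is maximal among compact open subgroups of $\GL_2(\QQ_p)$ containing it (any strictly larger compact open would have to include some element $p^n I$ with $n \neq 0$ in its center, destroying compactness), we conclude $K_p = \GL_2(\ZZ_p)$ for all $p \nmid M$. Hence $\mathcal{H}_{K_p}$ is the spherical Hecke algebra of $\GL_2(\QQ_p)$ at $p$, which by the Satake isomorphism is commutative (indeed, it is the polynomial algebra $\Qb[T(p), T(p,p)^{\pm 1}]$).

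Next I would use the restricted tensor product decomposition $\mathcal{H}_K = \bigotimes'_p \mathcal{H}_{K_p}$. A concrete consequence that I need is: for any primes $p \neq q$ and any $T \in \mathcal{H}_{K_p}$, $T' \in \mathcal{H}_{K_q}$, the convolutions $T \ast T'$ and $T' \ast T$ agree, because each is supported on the double coset $K \begin{pmatrix} g_p \\ & g_q \end{pmatrix} K$ (with $g_p, g_q$ in the relevant factors) and both take the value $T(g_p)T'(g_q)$ there, the integrations at other primes contributing $1$. Combined with the first step, this shows that any generator of $\mathcal{H}_K^{(M)}$ (lying in some $\mathcal{H}_{K_p}$ with $p \nmid M$) commutes with every generator of $\mathcal{H}_K$: if the second generator is at a prime $q \neq p$ they commute by the tensor structure just recalled, while if $q = p$ they commute by commutativity of the spherical algebra $\mathcal{H}_{\GL_2(\ZZ_p)}$.

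Thus $\mathcal{H}_K^{(M)}$ is central in $\mathcal{H}_K$, and the image $\TT_K^{(M)}$ is central in $\TT_K$. The only mildly delicate point, and really the only one worth flagging as an obstacle, is the maximality argument identifying $K_p$ with $\GL_2(\ZZ_p)$ for $p \nmid M$; everything else is a formal consequence of the adelic formalism recalled in \S\ref{section adelic}.
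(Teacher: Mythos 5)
Your proof is correct and follows essentially the same route as the paper's: the hypothesis $K(M)\subset K$ forces $K_p=\GL_2(\ZZ_p)$ for every $p \nupdownline M$, the spherical Hecke algebra at such a $p$ is commutative by Satake, and the restricted tensor product decomposition $\mathcal{H}_K=\bigotimes_p'\mathcal{H}_{K_p}$ places it in the center of $\mathcal{H}_K$, hence its image in the center of $\TT_K$. The only imprecision is your parenthetical justification of the maximality of $\GL_2(\ZZ_p)$: a strictly larger compact open subgroup need not contain a nontrivial scalar $p^nI$ (it could instead contain $\mathrm{diag}(p^a,p^{-a})$, whose central product with its Weyl conjugate is trivial), but non-compactness still follows from the Cartan decomposition since such a diagonal element generates an unbounded subgroup; this standard fact is taken for granted in the paper.
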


\begin{proof}
For any prime $p \nupdownline M$, we have $K_p = \GL_2(\ZZ_p)$ and by Satake the map $\Qb[T,S,S^{-1}] \to \mathcal{H}_{K_p}$ given by $T \mapsto \widetilde{T}(p)$ and $S \mapsto \widetilde{T}(p,p)$ is an isomorphism. In particular $\mathcal{H}_{K_p}$ is contained in the center of $\mathcal{H}_K$, whence the result.
\end{proof}

Let $U_F \subset \Zhat^\times$ denote the preimage of $\Gal(\QQ(\zeta_m)/F) \subset (\ZZ/m\ZZ)^\times$ under the natural map $\Zhat^\times \to (\ZZ/m\ZZ)^\times$ (note that $U_F$ does not depend on $m$). For any compact open subgroup $K \subset \GL_2(\A_f)$ with $\det(K)=\Zhat^\times$, let
\begin{equation}
K_F := \{k \in K; \det(k) \in U_F\}.
\end{equation}
Let $\pr : \A_f^\times \to \Zhat^\times$ be the projection associated to the decomposition $\A_f^\times \cong \QQ_{>0} \times \Zhat^\times$.

\begin{definition}
Let $\gamma : \GL_2(\A_f) \to G$ be the composite morphism
\begin{equation}
\GL_2(\A_f) \xrightarrow{\det} \A_f^\times \xrightarrow{\pr} \Zhat^\times \to (\frac{\ZZ}{m\ZZ})^\times \to G.
\end{equation}
\end{definition}
Note that there is an exact sequence
\begin{equation}\label{suite exacte KF}
1 \to K_F \to K \xrightarrow{\gamma |_K} G \to 1.
\end{equation}
The sequence (\ref{suite exacte KF}) induces a right action of $G$ on $\overline{M}_{K_F}$, and thus a left action of $G$ on $\Omega^1(\overline{M}_{K_F})$. Moreover, the curve $\overline{M}_{K_F}$ can be identified with $\overline{M}_K \otimes F$ as a curve over $\QQ$, and we have a bijection
\begin{align}
\label{identif MKF} \overline{M}_{K_F}(\CC) & \xrightarrow{\cong} G \times \overline{M}_K(\CC)\\
\nonumber [\tau,g] & \mapsto (\gamma(g),[\tau,g]).
\end{align}
The action of $G$ on $\overline{M}_{K_F}(\CC)$ corresponds via (\ref{identif MKF}) to the action by translation on the first factor of $G \times \overline{M}_K(\CC)$.

Now let us consider the case $K=K_1(N)$, so that $\overline{M}_{K_F} \cong X_1(N)_F$. By the previous discussion, the image of $G$ in $\End \Omega^1(X_1(N)_F) \otimes \Qb$ is contained in $\TT_{X_1(N) \otimes F}$. In order to ease notations, let $\T = \TT^{(Nm)}_{X_1(N) \otimes F} \subset \End \Omega^1(X_1(N)_F) \otimes \Qb$. Let $\TG$ be the subalgebra of $\TT_{X_1(N) \otimes F}$ generated by $\T$ and $G$.

\begin{lem}\label{lem3}
The algebra $\TG$ is commutative.
\end{lem}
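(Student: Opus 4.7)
The plan is to reduce the commutativity of $\TG$ to Lemma \ref{lem1}, applied not to $K = K_1(N)$ but to the smaller group $K_F$. Before invoking the lemma, I would verify the inclusion $K(Nm) \subset K_F$: any $k \in K(Nm)$ satisfies $k \equiv I \pmod{Nm}$, which immediately places $k$ in $K_1(N)$ and forces $\det(k) \in 1 + Nm\Zhat \subset 1 + m\Zhat$. Since $1 + m\Zhat$ is the kernel of $\Zhat^\times \to (\ZZ/m\ZZ)^\times$ and $U_F$ is the preimage of $\Gal(\QQ(\zeta_m)/F)$, which certainly contains the identity, we obtain $\det(k) \in U_F$ and hence $k \in K_F$.

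With this inclusion in hand, Lemma \ref{lem1} applied to the pair $(K_F, Nm)$ shows that $\T = \TT^{(Nm)}_{K_F}$ lies in the center of $\TT_{K_F} = \TT_{X_1(N) \otimes F}$. Two consequences follow at once: first, $\T$ is itself commutative; second, $\T$ commutes with the subgroup $G \subset \TT_{X_1(N) \otimes F}$, the embedding of $G$ into the full Hecke algebra having been established in the discussion preceding the lemma via the action of $G$ on $\Omega^1(\overline{M}_{K_F})$. The last ingredient is the abelianness of $G = \Gal(F/\QQ)$, which is part of the standing hypothesis on $F$.

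Combining these three observations, any two generators of $\TG$ (elements of $\T$ or elements of $G$) commute pairwise, and therefore $\TG$ is commutative. The only substantive verification is the inclusion $K(Nm) \subset K_F$, so there is no real obstacle here: the content of the statement is already packaged in Lemma \ref{lem1} via the Satake isomorphism at primes unramified for $K_F$.
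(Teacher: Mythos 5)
Your proposal is correct and follows essentially the same route as the paper: verify $K(Nm) \subset K_1(N)_F$, invoke Lemma \ref{lem1} to place $\T$ in the center of $\TT_{X_1(N)\otimes F}$ (hence commuting with the image of $G$), and conclude using that $G$ is abelian. Your explicit check of the inclusion $K(Nm) \subset K_F$ is a detail the paper leaves implicit, but the argument is identical.
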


\begin{proof}
Note that $K(Nm) \subset K_1(N)_F$, so $\T$ is commutative and commutes with $G$ by Lemma \ref{lem1}. Since $G$ is abelian, the result follows.
\end{proof}

Since $\Omega^1(X_1(N)_F) \cong \Omega^1(X_1(N)) \otimes F$, we can define the base change morphism $\nu_F : \End \Omega^1(X_1(N)) \to \End \Omega^1(X_1(N)_F)$ by $\nu_F(T)= T \otimes \id_F$. For any $\alpha \in (\ZZ/m\ZZ)^\times$, let $\sigma_\alpha$ be its image in $G$.

\begin{lem}\label{lem4}
For any prime $p \nupdownline Nm$, we have
\begin{align}
\label{nuF Tp}\nu_F \bigl(T(p)_{X_1(N)}\bigr) & = T(p)_{X_1(N) \otimes F} \cdot \sigma_p \in \TG\\
\label{nuF Tpp}\nu_F \bigl(T(p,p)_{X_1(N)}\bigr) & = T(p,p)_{X_1(N) \otimes F} \cdot \sigma_p^2 \in \TG.
\end{align}
\end{lem}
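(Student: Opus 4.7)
Both identities live in $\TG \subset \End \Omega^1(X_1(N)_F) \otimes \Qb$, so I would verify them by computing the action of each side on the complex points of $\overline{M}_{K_F}$, using the bijection (\ref{identif MKF}) which identifies $\overline{M}_{K_F}(\CC)$ with $G \times \overline{M}_K(\CC)$.

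First I would compute the image under $\gamma$ of the relevant determinants. Under the decomposition $\A_f^\times \cong \QQ_{>0} \times \Zhat^\times$, the idele $\varpi_p$ decomposes as $p \cdot u$ with $u \in \Zhat^\times$ satisfying $u_p = 1$ and $u_\ell = p^{-1}$ for $\ell \neq p$; projecting to $(\ZZ/m\ZZ)^\times$ then yields $p^{-1} \bmod m$. Hence $\gamma \begin{pmatrix} \varpi_p & 0 \\ 0 & 1 \end{pmatrix} = \sigma_p^{-1}$ and $\gamma \begin{pmatrix} \varpi_p & 0 \\ 0 & \varpi_p \end{pmatrix} = \sigma_p^{-2}$.

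For (\ref{nuF Tp}), setting $g = \begin{pmatrix} \varpi_p & 0 \\ 0 & 1 \end{pmatrix}$, I would decompose $K_F g K_F = \bigsqcup_j g_j K_F$ and describe the correspondence $T(p)_{X_1(N) \otimes F}$ on points by $[\tau, h] \mapsto \sum_j [\tau, h g_j]$. Each target point lies in the $G$-component $\gamma(h) \sigma_p^{-1}$, so via (\ref{identif MKF}) the correspondence sends $(\sigma_0, x)$ to a divisor supported in $\{\sigma_0 \sigma_p^{-1}\} \times \overline{M}_K(\CC)$. After projecting to the second factor one recovers exactly the Hecke orbit of $x$ under $T(p)_{X_1(N)}$, because the $K_F$-cosets inside $K_F g K_F$ project (after $G$-averaging) bijectively onto the $K$-cosets inside $K g K$, which define $T(p)_{X_1(N)}$ on $\overline{M}_K$. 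Meanwhile, by construction from the exact sequence (\ref{suite exacte KF}), $\sigma_p$ shifts the $G$-coordinate by $\sigma_p$ and acts trivially on the $\overline{M}_K(\CC)$-factor. Composing, $T(p)_{X_1(N) \otimes F} \cdot \sigma_p$ has trivial net $G$-shift ($\sigma_p^{-1} \cdot \sigma_p = 1$) and acts as $T(p)_{X_1(N)}$ on each $\overline{M}_K(\CC)$-slice. This is exactly the description of the base change $\nu_F(T(p)_{X_1(N)}) = T(p)_{X_1(N)} \otimes \id_F$, giving (\ref{nuF Tp}).

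The proof of (\ref{nuF Tpp}) is identical, with $\begin{pmatrix} \varpi_p & 0 \\ 0 & \varpi_p \end{pmatrix}$ in place of $g$: the corresponding Hecke correspondence shifts components by $\sigma_p^{-2}$, which is cancelled by right multiplication by $\sigma_p^2$.

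The main subtlety I expect is tracking the direction of the various component shifts (Hecke correspondence on points, induced action on cohomology via pullback, and Galois action) so that the cancellations work out consistently. Once these conventions are pinned down, the lemma follows from the elementary $\gamma$-computation together with the compatibility of (\ref{identif MKF}) with the $G$-action and the Hecke correspondences.
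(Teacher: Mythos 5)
Your argument is correct and is essentially the paper's own proof: both rest on the computation $\gamma(g)=\sigma_p^{-1}$ for $g=\left(\begin{smallmatrix}\varpi_p & 0\\ 0 & 1\end{smallmatrix}\right)$, obtained exactly as you do from the decomposition $\A_f^\times \cong \QQ_{>0}\times\Zhat^\times$, together with tracking the resulting shift of $G$-components under the identification (\ref{identif MKF}). The only difference is presentational: the paper packages the Hecke operator as $\beta_*\circ\alpha^*$ through the intermediate curve $\overline{M}_{K_1(N)\cap K_0(p)}$ and observes that after base change $\alpha_F=\alpha\otimes\id_F$ and $\beta_F=\beta\otimes\gamma(g^{-1})$, whereas you use a left-coset decomposition of the double coset on complex points; both reduce to the same cancellation $\sigma_p^{-1}\cdot\sigma_p=1$.
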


\begin{proof}
Let $g:=\begin{pmatrix} \varpi_p & 0 \\ 0 & 1 \end{pmatrix}$ and $K:=K_1(N) \cap g^{-1}K_1(N)g = K_1(N) \cap K_0(p)$. Note that $\det K=\Zhat^\times$. Consider the following correspondence
\begin{equation}
\xymatrix{
 & \overline{M}_K \ar[ld]_{\alpha} \ar[rd]^{\beta} \\
 X_1(N) \ar@{-->}[rr]^{\widetilde{T}(p)_{X_1(N)}} & & X_1(N)
}
\end{equation}
where $\alpha=\pi_{K,K_1(N)}$ and $\beta = g^{-1} \circ \pi_{K,g^{-1} K_1(N) g} = \pi_{gKg^{-1},K_1(N)} \circ g^{-1}$. Then $T(p)_{X_1(N)} = \beta_* \circ \alpha^*$ on $\Omega^1(X_1(N))$. Similarly $T(p)_{X_1(N) \otimes F}$ is defined by
\begin{equation}
\xymatrix{
 & \overline{M}_{K_F} \ar[ld]_{\alpha_F} \ar[rd]^{\beta_F} \\
 X_1(N)_F \ar@{-->}[rr]^{\widetilde{T}(p)_{X_1(N) \otimes F}} & & X_1(N)_F
}
\end{equation}
where $\alpha_F$ is the natural projection and $\beta_F$ is induced by $g^{-1}$. Using the identification $\overline{M}_{K_F} \cong \overline{M}_K \otimes F$ and the description (\ref{identif MKF}) of the complex points, we obtain $\alpha_F = \alpha \otimes \id_F$ and $\beta_F = \beta \otimes \gamma(g^{-1})$. Since $\gamma(g)=\sigma_{p^{-1}}$, we get $T(p)_{X_1(N) \otimes F} = \nu_F(T(p)_{X_1(N)}) \circ (\sigma_p)_*$ and thus (\ref{nuF Tp}). The proof of (\ref{nuF Tpp}) is similar.
\end{proof}

\section{A divisibility in the Hecke algebra}\label{section phi}

In this section we define and study a projection associated to $E_F$ using the Hecke algebra of $X_1(N)_F$.

Let $\varphi : X_1(N) \to E$ be a modular parametrization of the elliptic curve $E$, and let $\varphi_F : X_1(N)_F \to E_F$ be the base extension of $\varphi$ to $F$. Consider the map $e_F = \frac{1}{\deg \varphi_F} (\varphi_F)^* (\varphi_F)_*$ on $\Omega^1(X_1(N)_F)$.

\begin{lem}\label{lem5}
We have $e_F^2=e_F$ and $e_F \in \TG$.
\end{lem}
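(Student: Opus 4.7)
The plan is to deduce $e_F^2 = e_F$ directly from the projection formula, and to obtain $e_F \in \TG$ by reducing, via the base-change morphism $\nu_F$ and Lemma \ref{lem4}, to showing that the analogous idempotent $e$ on $\Omega^1(X_1(N))$ lies in $\TT^{(Nm)}_{X_1(N)}$.

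Idempotency is immediate: using $(\varphi_F)_*(\varphi_F)^* = (\deg \varphi_F)\cdot \id_{\Omega^1(E_F)}$, we get $e_F^2 = \tfrac{1}{\deg \varphi_F}(\varphi_F)^*(\varphi_F)_* = e_F$. Since $\varphi_F = \varphi \otimes \id_F$ and $\deg \varphi_F = \deg \varphi$, and pushforward and pullback commute with base change, we obtain $e_F = \nu_F(e)$, where $e := \tfrac{1}{\deg \varphi}\varphi^*\varphi_* \in \End \Omega^1(X_1(N))$ is the analogous idempotent. Suppose one can write $e$ as a $\Qb$-polynomial $P$ in finitely many $T(p)_{X_1(N)}$ and $T(p,p)_{X_1(N)}$ with $p \nupdownline Nm$. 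Applying $\nu_F$ and invoking Lemma \ref{lem4} then gives
\begin{equation*}
e_F = P\bigl(T(p)_{X_1(N) \otimes F}\cdot \sigma_p,\ T(p,p)_{X_1(N) \otimes F}\cdot \sigma_p^2\bigr),
\end{equation*}
which lies in $\TG$: by Lemma \ref{lem3} this algebra is commutative, and it contains each of the factors appearing in $P$.

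The crux is therefore to prove $e \in \TT^{(Nm)}_{X_1(N)}$. My approach is to identify $e$ with the isotypic projector onto the $f$-component of $\Omega^1(X_1(N)) \otimes \Qb$ under the $\TT^{(Nm)}_{X_1(N)}$-action; since this Hecke algebra is commutative and semisimple, every isotypic projector lies in it. Three ingredients combine: (i) the image of $e$ is the line $\varphi^*(\Omega^1(E))$, generated by a nonzero scalar multiple of $f(\tau)\,d\tau$; (ii) since $E$ has conductor $N$, $f$ is a newform of level exactly $N$, and strong multiplicity one guarantees that the eigensystem $\{T(p)\mapsto a_p\}_{p \nupdownline Nm}$ is realized in $S_2(\Gamma_1(N))$ uniquely on this line (neither oldforms from strictly smaller levels nor distinct newforms of level $N$ can share it), so the $f$-isotypic component is one-dimensional; (iii) the Hecke-equivariance of the modular parametrization, i.e.\ $T(p)\varphi^* = a_p \varphi^*$ and $\varphi_* T(p) = a_p \varphi_*$ for $p \nupdownline Nm$, implies that $e$ commutes with every element of $\TT^{(Nm)}_{X_1(N)}$. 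A rank-one idempotent commuting with $\TT^{(Nm)}_{X_1(N)}$ whose image equals the full one-dimensional $f$-isotypic component must coincide with the corresponding isotypic projector.

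The main obstacle is exactly this last identification: recognizing the geometric operator $e = \tfrac{1}{\deg\varphi}\varphi^*\varphi_*$ as a Hecke-theoretic element. It is here that strong multiplicity one (to force the $f$-isotypic component to be one-dimensional) and the Hecke-equivariance of $\varphi^*$ and $\varphi_*$ at good primes play a decisive role; the rest of the argument is formal manipulation with $\nu_F$ and the lemmas of \S \ref{section adelic}.
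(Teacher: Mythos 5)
Your proof is correct and takes essentially the same route as the paper's: idempotency from $(\varphi_F)_*(\varphi_F)^* = \deg \varphi_F$, the identity $e_F = \nu_F(e)$, membership of $e=\frac{1}{\deg\varphi}\varphi^*\varphi_*$ in $\TT^{(Nm)}_{X_1(N)}$ via newform theory, and Lemma \ref{lem4} to transport this into $\TG$. The only difference is that you spell out in detail (one-dimensionality of the $f$-isotypic component by strong multiplicity one, commutation of $e$ with the Hecke operators) what the paper compresses into a one-line appeal to Atkin--Lehner--Li theory.
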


\begin{proof}
The first equality follows from $(\varphi_F)_* (\varphi_F)^* = \deg \varphi_F$.

We have $e_F = \nu_F(e)$ where $e= \frac{1}{\deg \varphi} \varphi^* \varphi_* \in \End_\QQ \Omega^1(X_1(N))$. The image of $e$ is the $\QQ$-vector space generated by $\omega_f = 2i\pi f(z)dz$. Since $f$ is a newform of level $N$, the Atkin-Lehner-Li theory implies that $e \in \TT^{(Nm)}_{X_1(N)}$. The result now follows from Lemma \ref{lem4}.
\end{proof}

The space $\Omega = \varinjlim_{K} \Omega^1(\overline{M}_K) \otimes \Qb$ has a natural $\GL_2(\A_f)$-action and decomposes as a direct sum of irreducible admissible representations $\Omega_\pi$ of $\GL_2(\A_f)$. For any $K$ we have $\Omega^K = \Omega^1(\overline{M}_K) \otimes \Qb$. Let $\Pi(K)$ be the set of such $\pi$ satisfying $\Omega_\pi^K \neq \{0\}$. By \cite[p. 393]{langlands}, we have
\begin{equation}\label{dec Omega1}
\Omega^1(\overline{M}_K) \otimes \Qb = \bigoplus_{\pi \in \Pi(K)} \Omega_\pi^K
\end{equation}
where each $\Omega_\pi^K$ is a simple $\TT_K$-module. In particular $\TT_K$ is a semisimple algebra. By Lemma \ref{lem1}, the algebra $\T$ is contained in the center of $\TT_{K_1(N)_F}$. Using \cite[Prop 2.11]{langlands}, we deduce that $\T$ acts by scalar multiplication on each $\Omega_\pi^{K_1(N)_F}$, so there exists a morphism $\theta_{\pi} : \T \to \Qb$ such that $T$ acts as $\theta_\pi(T)$ on $\Omega_\pi^{K_1(N)_F}$. The multiplicity one and strong multiplicity one theorems \cite{piatetski-shapiro} ensure that the characters $(\theta_\pi)_{\pi \in \Pi(K)}$ are pairwise distinct.

For any $\chi \in \widehat{G}$, let $\pi (f \otimes \chi)$ be the automorphic representation of $\GL_2(\A_f)$ corresponding to the modular form $f \otimes \chi$. We have $\pi (f \otimes \chi) \cong \pi(f) \otimes (\chi \circ \det)$, where $\chi : \A_f^\times/\QQ_{>0} \to \CC^\times$ denotes the adèlization of $\chi$, sending $\varpi_p$ to $\chi(p)$ for every $p \nupdownline m$. Since $\pi(f) \in \Pi(K_1(N))$, it follows that $\pi(f \otimes \chi) \in \Pi(K_1(N)_F)$.

\begin{lem}\label{lem6}
For any prime $p \nupdownline Nm$, we have
\begin{align}
\label{theta Tp} \theta_{\pi(f \otimes \chi)} (T(p)) & = a_p \chi(p) \\
\label{theta Tpp} \theta_{\pi(f \otimes \chi)} (T(p,p)) & = \chi(p)^2.
\end{align}
\end{lem}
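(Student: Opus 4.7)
The plan is to reduce both identities to a purely local computation at $p$, using the twist isomorphism $\pi(f \otimes \chi) \cong \pi(f) \otimes (\chi \circ \det)$ recalled just before the statement. Since $p \nupdownline m$, the adelic character $\chi$ has unramified $p$-component $\chi_p$, so $\pi(f \otimes \chi)_p = \pi(f)_p \otimes (\chi_p \circ \det)$ is an unramified representation of $\GL_2(\QQ_p)$. Consequently the scalars $\theta_{\pi(f \otimes \chi)}(T(p))$ and $\theta_{\pi(f \otimes \chi)}(T(p,p))$ are simply the eigenvalues of $\widetilde T(p)$ and $\widetilde T(p,p)$ on its one-dimensional spherical subspace.

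The heart of the argument is the elementary twisting identity: if $\sigma$ is any unramified representation of $\GL_2(\QQ_p)$ with spherical eigenvalues $\theta_\sigma(T(p)) = a$ and $\theta_\sigma(T(p,p)) = b$, and $\mu$ is any unramified character of $\QQ_p^\times$, then the spherical eigenvalues of $\sigma \otimes (\mu \circ \det)$ are $a\mu(p)$ and $b\mu(p)^2$. This falls out immediately from the fact that the determinant has constant $p$-adic valuation on the support of each double coset: valuation $1$ on $K_p \begin{pmatrix} \varpi_p & 0 \\ 0 & 1 \end{pmatrix} K_p$ and valuation $2$ on the scalar coset $K_p \varpi_p I \, K_p$, so that $\mu(\det g)$ factors out of the convolution integral computing the action on the spherical vector $v \otimes 1$.

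To finish, I would apply this identity with $\sigma = \pi(f)_p$ and $\mu = \chi_p$, invoking the standard dictionary between the classical newform $f$ and its adelic avatar, which gives $\theta_{\pi(f)}(T(p)) = a_p$, and the fact that $\widetilde T(p,p)$ (being the characteristic function of a scalar double coset) acts through the central character $\omega_{\pi(f),p}(\varpi_p)$. This central character is trivial because $f$ has trivial nebentypus (its level is $\Gamma_0(N)$, not $\Gamma_1(N)$), giving $\theta_{\pi(f)}(T(p,p)) = 1$. Since $\chi_p(p) = \chi(p)$, we obtain $a_p \chi(p)$ and $\chi(p)^2$, as claimed. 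No genuine obstacle is expected; the only vigilance required is in checking the Hecke normalization, namely that $\widetilde T(p,p)$ corresponds here to the unitary/spherical convention, under which trivial central character really does yield the eigenvalue $1$ and not a power of $p$.
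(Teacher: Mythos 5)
Your proposal is correct and follows essentially the same route as the paper: both start from the known eigenvalues $\theta_{\pi(f)}(T(p)) = a_p$ and $\theta_{\pi(f)}(T(p,p)) = 1$ and derive the twisted eigenvalues from the fact that $\chi \circ \det$ is constant (equal to $\chi(p)$, resp.\ $\chi(p)^2$) on the double coset supporting $\widetilde{T}(p)$, resp.\ $\widetilde{T}(p,p)$. You merely spell out in local terms (spherical vectors, central character, normalization of $\widetilde{T}(p,p)$) what the paper compresses into the phrase ``follow formally.''
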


\begin{proof}
We know that $\theta_{\pi(f)} (T(p)) = a_p$ and $\theta_{\pi(f)} (T(p,p))= 1$. The equalities (\ref{theta Tp}) and (\ref{theta Tpp}) follow formally from the fact that $\chi \circ \det$ is equal to $\chi(p)$ on the double coset $K_1(N)_F \begin{pmatrix} \varpi_p & 0 \\ 0 & 1 \end{pmatrix} K_1(N)_F$.
\end{proof}

Let $e_{f \otimes \chi} : \Omega^1(X_1(N)_F) \otimes \Qb \to \Omega_{\pi(f \otimes \chi)}^{K_1(N)_F}$ be the projection induced by (\ref{dec Omega1}). The multiplicity one theorems imply that $e_{f \otimes \chi} \in \T$.

\begin{pro}\label{divisibility}
The element $e_\chi e_F$ is divisible by $e_{f \otimes \chi}$ in $\mathcal{T}G$.
\end{pro}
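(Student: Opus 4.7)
Since $\mathcal{T}G$ is commutative by Lemma \ref{lem3} and $e_{f\otimes\chi} \in \mathcal{T} \subset \mathcal{T}G$ is an idempotent, divisibility by $e_{f\otimes\chi}$ amounts to the operator identity $e_\chi e_F = e_{f\otimes\chi}\cdot e_\chi e_F$. So I plan to show that the image of $e_\chi e_F$ on $\Omega^1(X_1(N)_F)\otimes\Qb$ is contained in $\Omega_{\pi(f\otimes\chi)}^{K_1(N)_F}$.

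The first step is to compute this image explicitly. Under the identification $\Omega^1(X_1(N)_F) \cong \Omega^1(X_1(N))\otimes_\QQ F$, the base change $e_F = \nu_F(e) = e\otimes\id_F$ is the projector onto $V_f\otimes_\QQ F$, where $V_f = \Qb\cdot\omega_f$ is the line spanned by the newform $f$ (cf.\ the proof of Lemma \ref{lem5}). The group $G$ acts on $V_f\otimes_\QQ F$ through the second tensor factor, and $F\otimes_\QQ\Qb \cong \bigoplus_{\psi\in\widehat{G}} F_\psi$ with each $F_\psi$ a one-dimensional $\psi$-isotype. Since the idempotent $e_\chi = \frac{1}{|G|}\sum_\sigma \chi(\sigma)\sigma$ projects onto the $\overline{\chi}$-isotype, the image $L := e_\chi e_F(\Omega^1(X_1(N)_F)\otimes\Qb)$ is the one-dimensional $\Qb$-line $V_f\otimes F_{\overline{\chi}}$.

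It remains to show $L \subset \Omega_{\pi(f\otimes\chi)}^{K_1(N)_F}$. Since $\mathcal{T}$ acts on each summand $\Omega_\pi^{K_1(N)_F}$ of (\ref{dec Omega1}) by the character $\theta_\pi$, and these characters are pairwise distinct by strong multiplicity one, it suffices to check that $T(p)$ and $T(p,p)$ act on $L$ by the scalars $a_p\chi(p)$ and $\chi(p)^2$ predicted by Lemma \ref{lem6}, for every prime $p \nupdownline Nm$. On $V_f\otimes_\QQ F$ the operator $\nu_F(T(p)_{X_1(N)}) = T(p)_{X_1(N)}\otimes\id_F$ acts by $a_p$, while on $L$ the element $\sigma_p$ acts by $\overline{\chi}(\sigma_p) = \chi(p)^{-1}$. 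Combining this with (\ref{nuF Tp}), which reads $\nu_F(T(p)_{X_1(N)}) = T(p)_{X_1(N)\otimes F}\cdot\sigma_p$, forces $T(p)_{X_1(N)\otimes F}$ to act on $L$ by $a_p\chi(p)$ as required; the argument for $T(p,p)$ is analogous via (\ref{nuF Tpp}). The main delicacy is keeping track of the interaction between the $G$-isotype singled out by $e_\chi$ and the direction of the twist in Lemma \ref{lem4}: the two inversions must compose to yield exactly the Hecke eigenvalues of $\pi(f\otimes\chi)$, and not those of $\pi(f\otimes\overline{\chi})$.
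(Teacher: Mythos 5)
Your proof is correct and follows essentially the same route as the paper: reduce divisibility to showing the image of $e_\chi e_F$ lands in $\Omega_{\pi(f\otimes\chi)}^{K_1(N)_F}$, compute the $T(p)$ and $T(p,p)$ eigenvalues on that image by combining the twist relation of Lemma \ref{lem4} with the identity $e_\chi\sigma_p = \overline{\chi}(p)e_\chi$, and conclude via Lemma \ref{lem6} and strong multiplicity one. The only cosmetic difference is that you make the image explicit as the line $V_f\otimes F_{\overline{\chi}}$, whereas the paper phrases the same computation as operator identities $T(p)\,e_\chi e_F = a_p\chi(p)\,e_\chi e_F$ in $\mathcal{T}G$.
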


\begin{proof}
Since $e_\chi$, $e_F$ and $e_{f \otimes \chi}$ are commuting projections, it suffices to prove that the image of $e_\chi e_F$ is contained in the image of $e_{f \otimes \chi}$. We know that the image of $\varphi^* : \Omega^1(E) \to \Omega^1(X_1(N))$ lies in the kernel of $T(p)-a_p \in \TT_{X_1(N)}$. Therefore the image of $\varphi_F^*$ lies in the kernel of $\nu_F(T(p))-a_p$. Using Lemma \ref{lem4}, it follows that in $\mathcal{T}G$ we have
\begin{equation}
T(p) \sigma_p e_F = a_p e_F.
\end{equation}
Applying $e_\chi$ to both sides and using the identity $e_\chi \sigma_p = \overline{\chi}(p) e_\chi$ yields
\begin{equation}
T(p) e_\chi e_F = a_p \chi(p) e_\chi e_F.
\end{equation}
The same argument shows that $T(p,p) e_\chi e_F = \chi(p)^2 e_\chi e_F$. The proposition now follows from Lemma \ref{lem6} and the multiplicity one theorems.
\end{proof}

\section{Proof of the main results}

Recall that $\varphi : X_1(N) \to E$ is a modular parametrization, and that $\varphi_F$ is the base change of $\varphi$ to $F$. We have a commutative diagram
\begin{equation}
\xymatrix{
K_2(X_1(N)_F) \otimes \QQ \ar[r] \ar[d]_{(\varphi_F)_*} & H^1(X_1(N)_F(\CC),\RR)^{-} \ar[d]^{(\varphi_F)_*}\\
K_2(E_F) \otimes \QQ \ar[r] & H^1(E_F(\CC),\RR)^{-}
}
\end{equation}
where the horizontal maps are the regulator maps on $X_1(N)_F$ and $E_F$.

The strategy of the proof is to use Beilinson's theorem on $X_1(N)_F$ and then to get back to $E_F$ using the Hecke algebra.

Let $\mathcal{P}_{E/F} = (\varphi_F)_* \mathcal{P}_{X_1(N)/F} \subset K_2(E_F) \otimes \QQ$. By \cite[1.1.2(iii)]{schappacher-scholl}, we have $\mathcal{P}_{E/F} \subset H^2_{\mathcal{M}/\ZZ}(E_F,\QQ(2))$. We want to prove that $R_{E/F} := \reg_{E/F} (\mathcal{P}_{E/F})$ is a $\QQ$-structure satisfying (\ref{det REF}). Since $\mathcal{P}_{X_1(N)/F}$ is stable by the Hecke algebra, the spaces $\mathcal{P}_{E/F}$ and $R_{E/F}$ are stable by $G$.

For any $\chi \in \widehat{G}$, let $R_\chi = e_\chi(R_{E/F} \otimes \Qb)$ and $H_\chi = e_\chi (H_{E/F} \otimes \Qb)$. We want to compare $R_\chi$ and $H_\chi$. We have

\begin{align}
\nonumber \varphi_F^* R_\chi & = e_\chi \varphi_F^* (R_{E/F} \otimes \Qb)\\
\label{phiF Rchi} & = e_\chi e_F \reg_{X_1(N)/F} (\mathcal{P}_{X_1(N)/F} \otimes \Qb).
\end{align}
Similarly, we have
\begin{equation}
\label{phiF Hchi} \varphi_F^* H_\chi = e_\chi e_F (H_{X_1(N)/F} \otimes \Qb).
\end{equation}

We will build on the following theorem of Schappacher and Scholl. Let $\lambda_\chi$ be the unique element of $(\Qb \otimes \RR)^\times$ such that for every $\psi : \Qb \hookrightarrow \CC$, we have $\psi(\lambda_\chi) = L'(f \otimes \chi^\psi,0) \in \CC^\times$.  By \cite[1.2.4 and 1.2.6]{schappacher-scholl}, we have
\begin{equation}\label{thm schappacher scholl}
e_{f \otimes \chi} \bigl(\reg_{X_1(N)/F} (\mathcal{P}_{X_1(N)/F} \otimes \Qb) \bigr) = \lambda_\chi \cdot e_{f \otimes \chi} (H_{X_1(N)/F} \otimes \Qb).
\end{equation}
By Prop. \ref{divisibility}, the equality (\ref{thm schappacher scholl}) remains true when $e_{f \otimes \chi}$ is replaced by $e_\chi e_F$, so that $\varphi_F^* R_\chi = \lambda_\chi \cdot \varphi_F^* H_\chi$ by (\ref{phiF Rchi}) and (\ref{phiF Hchi}). Since $\varphi_F^*$ is injective, we get $R_\chi = \lambda_\chi \cdot H_\chi$. Put $V = H^1(E_F(\CC),\RR)^-$ and $V_\chi = e_\chi(V \otimes \Qb)$ for any $\chi \in \widehat{G}$.

\begin{lem}\label{H1EF RGmod}
The $\RR[G]$-module $V$ is free of rank $1$.
\end{lem}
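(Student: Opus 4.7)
The plan is to show $V \cong \RR[G]$ as $\RR[G]$-modules by complexifying, verifying that each character of $G$ appears in $V \otimes_\RR \CC$ with multiplicity exactly one, and then descending from $\CC$ to $\RR$.

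First I would compute $\dim_\RR V$. From the decomposition $H^1(E_F(\CC),\RR) \cong \bigoplus_{\psi \in \Sigma} H^1(E(\CC)_\psi,\RR)$ of total $\RR$-dimension $2d$, one checks that the complex conjugation $c^*$ acts with trace zero on $H^1(E_F(\CC),\RR)$: each real embedding of $F$ contributes $(+1)+(-1)=0$ on its $H^1$-factor, and each pair $\{\psi,\bar\psi\}$ of complex conjugate embeddings contributes zero since $c^*$ swaps the two factors without fixed diagonal contribution. Hence $\dim_\RR V = d$.

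Next I would decompose $V \otimes_\RR \CC = \bigoplus_{\chi \in \widehat{G}} V_\chi^\CC$ with $V_\chi^\CC := e_\chi(V \otimes \CC)$, and exhibit for each $\chi$ a nonzero element of $V_\chi^\CC$. By Lemma~\ref{eta chi}, the element $\eta_\chi$ belongs to $V \otimes \CC$, and $e_\chi \eta_\chi = \eta_\chi$ (by idempotency) gives $\eta_\chi \in V_\chi^\CC$. Using the formula $\sigma \cdot \eta^{\pm}(\iota) = \eta^{\pm}(\iota \circ \sigma^{-1})$ (which follows from the component-shifting identity $(\sigma \cdot r)_\psi = r_{\psi \circ \sigma}$ employed in the proof of Proposition~\ref{echi reg}), we expand
\[
\eta_\chi = \frac{1}{|G|} \sum_{\sigma \in G} \chi(\sigma) \, \eta^{\varepsilon}(\iota \circ \sigma^{-1}),
\]
where $\varepsilon = -\chi(-1) \in \{+,-\}$. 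The forms $\eta^{\varepsilon}(\iota \circ \sigma^{-1})$ for $\sigma \in G$ live in pairwise distinct connected components of $E_F(\CC)$ and are therefore linearly independent; since every coefficient $\chi(\sigma)/|G|$ is nonzero, we conclude $\eta_\chi \neq 0$.

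Combining $\dim_\CC V_\chi^\CC \geq 1$ for each of the $d$ characters of $G$ with the dimension count $\sum_\chi \dim_\CC V_\chi^\CC = \dim_\CC (V \otimes \CC) = d$ forces $\dim_\CC V_\chi^\CC = 1$ for every $\chi$, i.e., $V \otimes \CC \cong \CC[G]$ as $\CC[G]$-modules. For the descent back to $\RR$: since $G$ is abelian, $\RR[G]$ is a product of fields each isomorphic to $\RR$ or $\CC$, and the isomorphism class of an $\RR[G]$-module is determined by the multiplicities of the irreducible $\CC$-characters of $G$ in its complexification; this yields $V \cong \RR[G]$, proving that $V$ is free of rank one. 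I expect this last descent step to be the only point requiring care—the rest is linear algebra once the $\eta_\chi$ are identified as explicit nonzero $\chi$-eigenvectors inside $V$.
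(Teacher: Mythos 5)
Your proof is correct, but it takes a genuinely different route from the paper's. The paper disposes of the lemma in two lines: by Poincar\'e duality $V \cong \Hom_\QQ(\Omega^1(E_F),\RR)$, and $\Omega^1(E_F) \cong \Omega^1(E) \otimes_\QQ F$ is free of rank $1$ over $\QQ[G]$ because $F$ is, by the normal basis theorem. You instead argue by characters: the trace computation for $c^*$ gives $\dim_\RR V = d$, the explicit elements $\eta_\chi$ (nonzero because their components $\eta^{\varepsilon}(\iota\circ\sigma^{-1})$ live on distinct connected components of $E_F(\CC)$) show each isotypic piece $V_\chi^\CC$ is nonzero, the dimension count forces $V\otimes_\RR\CC \cong \CC[G]$, and Noether--Deuring descent gives $V \cong \RR[G]$. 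Each step checks out, including the component-shift identity $\sigma\cdot\eta^{\pm}(\iota)=\eta^{\pm}(\iota\circ\sigma^{-1})$ and the fact that the $G$-action commutes with $c^*$ (both maps are induced by morphisms defined over $\QQ$ resp.\ $\RR$). What the paper's argument buys is brevity and a direct identification of the $\QQ[G]$-module structure without passing through $\CC$; what yours buys is the explicit verification that each $\eta_\chi$ is a nonzero vector in $V_\chi$ --- a fact the paper uses later anyway (when it asserts that $\eta_\chi$ is a $\Qb$-basis of $H_\chi$) but never isolates, so your route makes that subsequent step self-contained. The only place where you could add a word of justification is the claim that $c^*$ has trace zero on each $H^1$-factor attached to a real embedding: this holds because $c$ is antiholomorphic, so $c^*$ swaps $H^{1,0}$ and $H^{0,1}$, but it is standard and also visible from the paper's formulas $c^*\eta^{\pm}=\pm\eta^{\pm}$.
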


\begin{proof}
By Poincaré duality $V \cong \Hom_\QQ(\Omega^1(E_F),\RR)$, and $\Omega^1(E_F) \cong \Omega^1(E) \otimes F$ is free of rank $1$ over $\QQ[G]$ by the normal basis theorem.
\end{proof}

We will use the following lemma from linear algebra. Recall that if $B$ is an $A$-algebra and $N$ is a $B$-module, an \emph{$A$-structure of $N$} is an $A$-submodule $M \subset N$ such that $M \otimes_A B \xrightarrow{\cong} N$.

\begin{lem}\label{QG structure}
Let $M$ be a $\QQ[G]$-submodule of $V$. The following conditions are equivalent :
\begin{enumerate}
\item[(i)] $M$ is a $\QQ$-structure of the real vector space $V$.
\item[(ii)] For any $\chi \in \widehat{G}$, the space $M_\chi := e_\chi (M \otimes \Qb)$ is a $\Qb$-structure of the $\Qb \otimes \RR$-module $V_\chi$.
\end{enumerate}
Moreover, if these conditions hold, then $M$ is free of rank $1$ over $\QQ[G]$.
\end{lem}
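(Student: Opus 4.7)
The plan is to decompose everything along the characters $\chi \in \widehat{G}$ and reduce both claims to a componentwise check over $\Qb$. Since $G$ is abelian, the idempotents $e_\chi$ are mutually orthogonal and sum to $1$ in $\Qb[G]$, giving a ring isomorphism $\Qb[G] \cong \prod_{\chi \in \widehat{G}} \Qb$. Consequently, for any $\QQ[G]$-module $N$ there is a canonical decomposition $N \otimes_\QQ \Qb = \bigoplus_\chi e_\chi(N \otimes \Qb)$. Applying this to both $M$ and the $\QQ[G]$-module underlying $V$ yields $M \otimes_\QQ \Qb = \bigoplus_\chi M_\chi$ and $V \otimes_\QQ \Qb = \bigoplus_\chi V_\chi$ as modules over the $\Qb$-algebra $\Qb \otimes_\QQ \RR$.

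For the equivalence (i) $\Leftrightarrow$ (ii), I would first observe that an $\RR$-linear map $\iota : A \to B$ is an isomorphism iff $\iota \otimes_\QQ \Qb$ is, because $\Qb$ is flat over $\QQ$ and a nonzero $\RR$-vector space $W$ contains a copy of $\QQ$, so $W \otimes_\QQ \Qb \neq 0$. Applying this to the comparison map $\iota : M \otimes_\QQ \RR \to V$ and regrouping tensor products, I see that $\iota$ is an iso iff the map $\bigoplus_\chi M_\chi \otimes_\Qb (\Qb \otimes_\QQ \RR) \to \bigoplus_\chi V_\chi$ is an iso, which holds iff each summand is, i.e.\ iff condition (ii) is satisfied.

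For the freeness statement, suppose (ii) holds. By Lemma \ref{H1EF RGmod}, $V$ is free of rank $1$ over $\RR[G]$, so after extension of scalars each $V_\chi$ is free of rank $1$ over $\Qb \otimes_\QQ \RR$; condition (ii) then forces $\dim_\Qb M_\chi = 1$ for every $\chi \in \widehat{G}$. The semisimple algebra $\QQ[G]$ decomposes as $\prod_{[\chi]} \QQ(\chi)$ indexed by $\Gal(\Qb/\QQ)$-orbits $[\chi] \subset \widehat{G}$, and $M$ decomposes correspondingly as $\bigoplus_{[\chi]} M_{[\chi]}$ with $M_{[\chi]}$ a $\QQ(\chi)$-vector space. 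Using orbit-stabilizer, $\dim_\QQ M_{[\chi]} = |[\chi]| = [\QQ(\chi):\QQ]$, whence $\dim_{\QQ(\chi)} M_{[\chi]} = 1$ and $M \cong \QQ[G]$. The main point throughout is just tracking the character decomposition; there is no deep obstacle, only careful bookkeeping with extension of scalars between $\QQ$, $\Qb$ and $\RR$.
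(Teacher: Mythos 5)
Your proof is correct, and it differs from the paper's in both halves. For the equivalence, the paper argues the two implications separately: (i) $\Rightarrow$ (ii) by base change through $e_\chi$, and (ii) $\Rightarrow$ (i) by a dimension count ($\dim_{\Qb} M_\chi = 1$ for each $\chi$, hence $\dim_\QQ M = d$) combined with the observation that $M$ spans $V$ over $\RR$. You instead treat both directions at once by applying faithful flatness of $\QQ \to \Qb$ to the single comparison map $M \otimes_\QQ \RR \to V$, which becomes block-diagonal for the character decomposition after extension of scalars; this is cleaner and makes the equivalence essentially formal, at the small cost of having to note that the comparison map is $G$-equivariant so that it really does respect the decomposition $\bigoplus_\chi$. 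For the freeness, the paper is terser: it notes that $M \otimes \RR \cong \RR[G]$ by Lemma \ref{H1EF RGmod} and concludes $M \cong \QQ[G]$, implicitly using that a $\QQ[G]$-module is determined by its scalar extension to $\RR$ (Noether--Deuring, or character theory). You make this step explicit and elementary via the Wedderburn decomposition $\QQ[G] \cong \prod_{[\chi]} \QQ(\chi)$ over Galois orbits together with the identity $\lvert [\chi] \rvert = [\QQ(\chi):\QQ]$ --- which is in fact the same decomposition the paper uses later in Lemma \ref{lemme alg} --- so your route is more self-contained and dovetails with material the paper needs anyway, while the paper's is shorter.
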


\begin{proof}
The implication $(i) \Rightarrow (ii)$ follows from the isomorphisms $M_\chi \otimes_{\Qb} (\Qb \otimes \RR) \cong e_\chi (M \otimes \Qb \otimes \RR) \cong V_\chi$. Let us assume $(ii)$. By Lemma \ref{H1EF RGmod}, the $\Qb \otimes \RR$-module $V_\chi$ is free of rank $1$, so that $\dim_{\Qb} M_\chi = 1$. Since $M \otimes \Qb \cong \bigoplus_{\chi \in \widehat{G}} M_\chi$, we get $\dim_\QQ M = d$. Moreover $M \otimes \Qb \otimes \RR$ generates $V \otimes \Qb$ over $\Qb \otimes \RR$, so that any $\QQ$-basis of $M$ is actually free over $\RR$.

Finally, if $(i)$ holds, then $M$ is isomorphic to the regular representation of $G$ by Lemma \ref{H1EF RGmod}, so that $M$ is free of rank $1$ over $\QQ[G]$.
\end{proof}

Using Lemma \ref{QG structure} with the $\QQ$-structure $H_{E/F}$, we see that $H_\chi$ is a $\Qb$-structure of $V_\chi$. By Lemma \ref{eta chi}, the $1$-form $\eta_\chi$ is a $\Qb$-basis of $H_\chi$.

\begin{proof}[Proof of Theorem \ref{regEF thm}]
Since $R_\chi = \lambda_\chi \cdot H_\chi$ is a $\Qb$-structure of $V_\chi$, Lemma \ref{QG structure} implies that $R_{E/F}$ is a $\QQ$-structure of $V$. Moreover, the determinant of $R_{E/F} \otimes \Qb$ with respect to $H_{E/F} \otimes \Qb$ is represented by $\delta := \prod_{\chi \in \widehat{G}} \lambda_\chi \in (\Qb \otimes \RR)^\times$. Note that $\sigma(\lambda_\chi) = \lambda_{\chi^\sigma}$ for any $\sigma \in \Gal(\Qb/\QQ)$, so that $\delta$ lies in the image of $\RR^\times$ in $(\Qb \otimes \RR)^\times$. Using the natural evaluation map $(\Qb \otimes \RR)^\times \xrightarrow{\iota} \CC^\times$, we get in fact $\delta = \prod_{\chi \in \widehat{G}} L'(f \otimes \chi,0)$. Since the natural map $\RR^\times / \QQ^\times \to (\Qb \otimes \RR)^\times / \Qb^\times$ is injective, we conclude that $\det(R_{E/F}) = L^{(d)}(E/F,0) \cdot \det(H_{E/F})$ by (\ref{LEF0}).
\end{proof}

\begin{proof}[Proof of Theorem \ref{LEchi0 thm}]
We know from Theorem \ref{regEF thm} that $R_{E/F}$ is a $\QQ$-structure of $V$. Since $R_{E/F}$ is stable by $G$, it is free of rank $1$ over $\QQ[G]$ by Lemma \ref{QG structure}. Let $\gamma \in \mathcal{P}_{E/F}$ such that $R_{E/F} = \QQ[G] \cdot \reg_{E/F}(\gamma)$. Replacing $\gamma$ by a suitable integer multiple, we may assume that $\gamma$ has a representative $\widetilde{\gamma} \in F(E)^\times \otimes F(E)^\times$. Let $\ell = \beta(\widetilde{\gamma})$. For any $\chi \in \widehat{G}$, we have $R_\chi = \mu_\chi(\ell) H_\chi$ by Prop. \ref{echi reg}, where $\mu_\chi(\ell)$ is given by (\ref{eq echi reg}). It follows that $\mu_\chi(\ell)/\lambda_\chi \in \Qb^\times$. Since $\lambda_\chi$ and $\mu_\chi(\ell)$ belong to $\QQ(\chi) \otimes \RR$, we have in fact $\mu_\chi(\ell)/\lambda_\chi \in \QQ(\chi)^\times$. Moreover, the definitions of $\lambda_\chi$ and $\mu_\chi(\ell)$ show that
\begin{equation}
\tau(\lambda_\chi) = \lambda_{\chi^\tau} \quad \textrm{and} \quad \tau(\mu_\chi(\ell)) = \mu_{\chi^\tau}(\ell) \qquad (\tau \in \Gal(\QQ(\chi)/\QQ)).
\end{equation}

\begin{lem}\label{lemme alg}
Let $(a_\chi)_{\chi \in \widehat{G}}$ be a family of algebraic numbers, with $a_\chi \in \QQ(\chi)^\times$, such that $\tau(a_\chi) = a_{\chi^\tau}$ for any $\chi$ and any $\tau \in \Gal(\QQ(\chi)/\QQ)$. Then there exists a unique $a \in \QQ[G]^\times$ such that for every $\chi \in \widehat{G}$, we have $\chi(a) = a_\chi$.
\end{lem}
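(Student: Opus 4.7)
The plan is to construct $a$ explicitly as a $\Qb$-linear combination of the orthogonal idempotents $e_\chi$ and then verify that it descends to $\QQ[G]$. First I would set
\[
a := \sum_{\chi \in \widehat{G}} a_\chi e_\chi \in \Qb[G].
\]
The orthogonality $\chi'(e_\chi) = \delta_{\chi,\chi'}$ gives $\chi(a) = a_\chi$ for every $\chi \in \widehat{G}$. This simultaneously handles uniqueness: the map $\Qb[G] \to \Qb^{\widehat{G}}$ sending $x \mapsto (\chi(x))_{\chi}$ is an isomorphism of $\Qb$-algebras, so an element of $\QQ[G] \subset \Qb[G]$ is determined by its values under all characters.

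The main step is to show $a \in \QQ[G]$, i.e.\ that $a$ is fixed by $\Gal(\Qb/\QQ)$ acting on $\Qb[G]$ coefficient-wise in the group basis $\{[\sigma] : \sigma \in G\}$. A direct computation
\[
\tau(e_\chi) = \frac{1}{|G|} \sum_{\sigma \in G} \tau(\chi(\sigma)) \cdot [\sigma] = \frac{1}{|G|} \sum_{\sigma \in G} \chi^\tau(\sigma) \cdot [\sigma] = e_{\chi^\tau}
\]
shows how $\Gal(\Qb/\QQ)$ permutes the idempotents. The compatibility hypothesis $\tau(a_\chi) = a_{\chi^\tau}$ extends from $\Gal(\QQ(\chi)/\QQ)$ to all of $\Gal(\Qb/\QQ)$ by restriction, since $a_\chi \in \QQ(\chi)$. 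One then gets
\[
\tau(a) = \sum_{\chi} \tau(a_\chi) \cdot \tau(e_\chi) = \sum_\chi a_{\chi^\tau} e_{\chi^\tau} = a
\]
after reindexing $\chi' = \chi^\tau$, so $a \in \QQ[G]$.

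Finally, to see $a \in \QQ[G]^\times$, I would use that $\Qb[G] \cong \Qb^{\widehat{G}}$ via the characters and that each $a_\chi$ is nonzero by hypothesis ($a_\chi \in \QQ(\chi)^\times$), so $a$ is already a unit in $\Qb[G]$; combined with $a \in \QQ[G]$ this forces $a \in \QQ[G]^\times$. No real obstacle is expected here: the only delicate point is the verification that Galois permutes the idempotents by $\tau(e_\chi) = e_{\chi^\tau}$ and the clean reindexing that follows, both of which are essentially formal. Conceptually, the lemma is just the restatement, on characters, of the decomposition $\QQ[G] \cong \prod_{[\chi]} \QQ(\chi)$ indexed by $\Gal(\Qb/\QQ)$-orbits of characters.
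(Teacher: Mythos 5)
Your proof is correct, but it takes a more constructive route than the paper's. The paper considers the evaluation map $\Psi : \QQ[G] \to \prod_{\chi} \QQ(\chi)$, observes that it is injective with image contained in the subalgebra $W$ of Galois-compatible families, and concludes that $\Psi : \QQ[G] \to W$ is an isomorphism by the dimension count $\dim_\QQ W = \# \widehat{G} = d$ (obtained by summing $[\QQ(\chi):\QQ]$ over the Galois orbits of characters). Surjectivity, hence the existence of $a$, is thus obtained abstractly, whereas you exhibit the preimage explicitly as $\sum_\chi a_\chi e_\chi$ and verify Galois descent via $\tau(e_\chi) = e_{\chi^\tau}$. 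The two arguments are two faces of the same decomposition $\QQ[G] \cong \prod_{[\chi]} \QQ(\chi)$, as you note at the end; yours has the mild advantage of producing $a$ by a formula and of making the unit statement transparent (the inverse $\sum_\chi a_\chi^{-1} e_\chi$ satisfies the same descent condition, so lies in $\QQ[G]$), while the paper's version dispenses with the idempotent computation entirely. One small point to fix: with the paper's normalization $e_\chi = \frac{1}{|G|}\sum_{\sigma} \chi(\sigma)\cdot[\sigma]$, which is the one your displayed computation of $\tau(e_\chi)$ uses, the orthogonality relation reads $\chi'(e_\chi) = \delta_{\chi',\overline{\chi}}$ rather than $\delta_{\chi',\chi}$; so you should either set $a = \sum_\chi a_\chi e_{\overline{\chi}}$ or work with the idempotent $\frac{1}{|G|}\sum_\sigma \chi(\sigma^{-1})\cdot[\sigma]$ throughout. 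This is pure bookkeeping and does not affect the substance of the argument.
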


\begin{proof}
The canonical morphism of $\QQ$-algebras $\Psi : \QQ[G] \to \prod_{\chi \in \widehat{G}} \QQ(\chi)$ is injective and its image is contained in the subalgebra $W$ of families $(b_\chi)_{\chi}$ satisfying $\tau(b_\chi) = b_{\chi^\tau}$ for any $\chi$ and $\tau$. Writing $\widehat{G}$ as a disjoint union of Galois orbits, we have $\dim_\QQ W = \# \widehat{G} = d$, so that $\Psi$ is an isomorphism.
\end{proof}

Using Lemma \ref{lemme alg} with $a_\chi := \mu_\chi(\ell)/\lambda_\chi$, we get $a \in \QQ[G]^\times$ such that $\mu_\chi(\ell) = \chi(a) \lambda_\chi$ for any $\chi$. Since $\mu_\chi(a\ell) = \overline{\chi}(a) \mu_\chi(\ell)$, replacing $\ell$ with a suitable integer multiple of $a\ell$ results in $\mu_\chi(\ell) \sim_{\QQ^\times} \lambda_\chi$ for any $\chi$. Evaluating everything in $\CC$ yields (\ref{LEchi0 formula}).
\end{proof}


\begin{proof}[Proof of Corollary]
Let us first recall the Dedekind-Frobenius formula for group determinants. If $a : G \to \CC$ is an arbitrary function, let $A$ be the matrix $(a(gh^{-1}))_{g,h \in G}$. Then
\begin{equation}\label{frobenius determinant}
\det(A) = \prod_{\chi \in \widehat{G}} \sum_{g \in G} \chi(g) a(g).
\end{equation}

Let $\ell \in \ZZ[E(\Qb)]^{G_F}$ be a divisor satisfying the identities (\ref{LEchi0 formula}) of Theorem \ref{LEchi0 thm}. Assume first $F$ is real. Put $\ell_i := \ell^{\sigma_i^{-1}}$ for $1 \leq i \leq d$. Using (\ref{frobenius determinant}) with $a(\sigma)=D_E(\ell^\sigma)$ yields
\begin{equation}
\det \bigl(D_E(\ell_i^{\sigma_j})\bigr)_{1 \leq i,j \leq d} \sim_{\QQ^\times} \prod_{\chi \in \widehat{G}} \pi L'(E \otimes \chi,0) \sim_{\QQ^\times} \pi^{-d} L(E/F,2)
\end{equation}
where the last relation follows from (\ref{LEF0}) and Prop. \ref{LEF20}.

Assume now $F$ is complex. Put $\ell_i := \ell^{\sigma_i^{-1}}$ for $1 \leq i \leq d/2$. Let us use (\ref{frobenius determinant}) with the function $a(\sigma)=D_E(\ell^\sigma)+J_E(\ell^\sigma)$. Indexing the lines and columns of $A$ by $\sigma_1,\overline{\sigma_1},\ldots,\sigma_{d/2},\overline{\sigma_{d/2}}$, we see that $A$ consists of $\frac{d}{2}$ blocks of the forms $\begin{pmatrix} x+y & x-y \\ x-y & x+y \end{pmatrix}$, where $x=D_E(\ell^{\sigma_j \sigma_i^{-1}})$ and $y=J_E(\ell^{\sigma_j \sigma_i^{-1}})$. Elementary operations on the lines and columns of $A$ thus gives
\begin{equation}
\det A = 2^d \det \bigl(D_E(\ell_i^{\sigma_j})\bigr)_{1 \leq i,j \leq d/2} \cdot \det \bigl(J_E(\ell_i^{\sigma_j})\bigr)_{1 \leq i,j \leq d/2}.
\end{equation}
On the other hand, we have
\begin{equation}
\sum_{\sigma \in G} \chi(\sigma) a(\sigma) = \begin{cases} \sum_{\sigma \in G} \chi(\sigma) D_E(\ell^\sigma) & \textrm{if } \chi \textrm{ is even},\\
\sum_{\sigma \in G} \chi(\sigma) J_E(\ell^\sigma) & \textrm{if } \chi \textrm{ is odd},
\end{cases}
\end{equation}
so that we conclude as in the first case.
\end{proof}

\section*{Further remarks and open questions}

During the course of proving Theorem \ref{regEF thm}, we crucially needed the fact that $F/\QQ$ is abelian, in order for the curve $X_1(N)_F$ to be itself a modular curve. Another key part of the argument was to realize the $\chi$-part of the cohomology of $E$ as a subspace of a suitable Hecke eigenspace. Note that this subspace can be strict, because of the existence of old forms or because it can happen that $f \otimes \chi = f$ (for example when $E$ has complex multiplication).

Note that if the extension $F/\QQ$ isn't abelian, the curve $X_1(N)_F$ might not be covered by a modular curve. In this case, we don't know how to prove a single example of Zagier's conjecture for $E/F$. We also have no example in the case of an elliptic curve over a number field which isn't the base extension of an elliptic curve over $\QQ$.

It would be interesting to investigate the rational factors appearing in (\ref{LEchi0 thm}). These factors might be linked with the Bloch-Kato conjectures for $L(E/F,2)$. However, even for elliptic curves over $\QQ$, we don't know of a precise conjecture predicting the rational factor appearing in Zagier's conjecture.

Although the divisor $\ell$ produced by Theorem \ref{LEchi0 thm} is inexplicit in general, it would be interesting to try to bound the number field generated by the support of $\ell$, as well as the heights of these points.

Finally, Theorem \ref{LEchi0 thm} suggests to formulate an equivariant version of Zagier's conjecture for base extensions of elliptic curves, along the lines of the equivariant Tamagawa number conjecture of Burns and Flach \cite[Part 2, Conjecture 3]{flach}. For example, in the case $F$ is real, Theorem \ref{LEchi0 thm} gives a link between the equivariant $L$-function of $E_F$ evaluated at $2$, which is an element of $\RR[G]$, and the vector-valued elliptic dilogarithm $\Dvec(\ell) := \sum_{\sigma \in G} D_E(\ell^\sigma) [\sigma]$.

\bibliographystyle{plain}
\bibliography{zagierLEF}

\end{document}